\newcommand{\bburl}[1]{\textcolor{blue}{\url{#1}}}
\newcommand{\addresseshere}{%
  \enddoc@text\let\enddoc@text\relax
}
\numberwithin{equation}{section}
\newtheorem{thm}{Theorem}[section]
\theoremstyle{plain}
\newtheorem{corollary}[thm]{Corollary}
\newtheorem{definition}[thm]{Definition}
\newtheorem{lemma}[thm]{Lemma}
\newtheorem{proposition}[thm]{Proposition}
\newtheorem{theorem}[thm]{Theorem}
\newtheorem{observation}[thm]{Observation}
\theoremstyle{definition}
\newtheorem{example}[thm]{Example}
\newtheorem{question}[thm]{Question}
\newcommand\be{\begin{equation}}
\newcommand\ee{\end{equation}}
\newcommand\bea{\begin{eqnarray}}
\newcommand\eea{\end{eqnarray}}
\newcommand\bi{\begin{itemize}}
\newcommand\ei{\end{itemize}}
\newcommand\ben{\begin{enumerate}}
\newcommand\een{\end{enumerate}}
\newcommand\bc{\begin{center}}
\newcommand\ec{\end{center}}
\newcommand\ba{\begin{array}}
\newcommand\ea{\end{array}}
\newcommand{\hr}[1]{\href{#1}{\url{#1}}}
\title{A geometric perspective on the MSTD question}
\author{Steven J. Miller}
\email{\textcolor{blue}{\href{mailto:sjm1@williams.edu, Steven.Miller.MC.96@aya.yale.edu}{sjm1@williams.edu,Steven.Miller.MC.96@aya.yale.edu}}}
\address{Department of Mathematics and Statistics, Williams College, Williamstown, MA 01267}
\author{Carsten Peterson}
\email{\textcolor{blue}{\href{mailto:carstenp@umich.edu}{carstenp@umich.edu}}}
\address{Department of Mathematics, University of Michigan, Ann Arbor, MI 48109}
\thanks{The first named author was partially supported by NSF Grants DMS1265673 and DMS1561945, and the second by DMS1347804. We thank Mel Nathanson, Carsten Sprunger and Roger Van Peski for helpful conversations.}
\subjclass[2010]{11P99, 
11P21, 
52C35, 
52C99} 
\keywords{}
\date{\today}
\begin{document}

\maketitle


\begin{abstract}
A more sums than differences (MSTD) set $A$ is a subset of $\mathbb{Z}$ for which $|A+A| > |A-A|$. Martin and O'Bryant used probabilistic techniques to prove that a non-vanishing proportion of subsets of $\{1, \dots, n\}$ are MSTD as $n \to \infty$. However, to date only a handful of explicit constructions of MSTD sets are known. We study finite collections of disjoint intervals on the real line, $\mathbb{I}$, and explore the MSTD question for such sets, as well as the relation between such sets and MSTD subsets of $\mathbb{Z}$. In particular we show that every finite subset of $\mathbb{Z}$ can be transformed into an element of $\mathbb{I}$ with the same additive behavior. Using tools from discrete geometry, we show that there are no MSTD sets in $\mathbb{I}$ consisting of three or fewer intervals, but there are MSTD sets for four or more intervals. Furthermore, we show how to obtain an infinite parametrized family of MSTD subsets of $\mathbb{Z}$ from a single such set $A$; these sets are parametrized by lattice points satisfying simple congruence relations contained in a polyhedral cone associated to $A$.
\end{abstract}

\section{Introduction} \label{sec:intro}

Let $A$ be a finite subset of the integers. The \textbf{sumset} and \textbf{difference set} are defined, respectively, as:
\begin{gather}
A+A\ :=\ \{a_1 + a_2: a_1, a_2 \in A\}, \\
A-A\ :=\ \{a_1 - a_2: a_1, a_2 \in A\}.
\end{gather}
If $|A-A| > |A+A|$, we say that the set $A$ is \textbf{difference dominant}. If $|A+A| > |A-A|$, we say that $A$ is \textbf{sum dominant} or, following the terminology of \cite{nathanson1}, a \textbf{more sums than differences (MSTD) set}. If $|A-A| = |A+A|$, we say $A$ is \textbf{balanced}.

Due to the commutativity of addition, there is a lot of redundancy in $A+A$, and its size is at most $\binom{n}{2} + n = \frac{n(n+1)}{2}$ where $n = |A|$ (with equality being achieved with a geometric progression, for example). In the difference set, although 0 can be represented in numerous ways (e.g. $0 = a_i - a_i$ for any $i$), as subtraction is not commutative there are at most $n^2 - n + 1$ elements in $A-A$ (again with equality being achieved when $A$ is a geometric progression). 

Since the difference set has the potential to be much larger than the sumset, we might naively believe that in general the difference set is larger. A well-known example of an MSTD set, whose exact origin is not clear, is $\{0$, $2$, $3$, $4$, $7$, $11$, $12$, $14\}$; others were given in \cite{marica} and \cite{freiman_pigarev}. Ruzsa \cite{ruzsa1, ruzsa2, ruzsa3} used probabilistic techniques to prove the existence of many MSTD sets. Though Roesler \cite{roesler} was able to show that the average size of the difference set is larger than the average value of the sumset for subsets of $[n] := \{1, \dots, n\}$, surprisingly, Martin and O'Bryant \cite{martin_ob} proved using probabilistic techniques that for all $n \geq 15$, the probability of being MSTD among subsets of $[n]$ is at least $2 \times 10^{-7}$. Zhao \cite{zhao1} showed that the probability of being sum dominant converges to a limit as $n \to \infty$ and that this limiting probability is at least $4.28 \times 10^{-4}$. Based on computer simulation, we expect the true limiting value to be around $4.5 \times 10^{-4}$. If, however, we independently choose elements of $[n]$ to be in $A$ with probability $p(n)$ which tends to zero with $n$, then Hegarty and Miller \cite{HegM09} proved that with probability 1 such a set is difference dominated.

In the last decade there has been considerable interest in explicit constructions of MSTD sets (which in light of \cite{martin_ob} must exist in large numbers). A well known technique for producing an infinite family of MSTD sets from a single one is base expansion: let $A = \{a_1, \dots, a_n\}$ be an MSTD set. For each $t \in \mathbb{N}$ and some fixed $m > 2 * \max\{|a_i|: i \in [n]\}$, define $A_t$ as
\begin{gather}
A_t\ :=\ \left\{\sum_{i = 1}^k a_j m^{i-1} : j \in [n], k \in [t]\right\}.
\end{gather}
Then $|A_t \pm A_t| = |A \pm A|^t$, thus leading to a parametrized infinite family of MSTD sets, though of extremely low density. Nathanson in \cite{nathanson2} asks if there are other parametrized families of MSTD sets. Hegarty \cite{hegarty} and Nathanson provide a positive answer; in both cases their ideas involve taking some set which is symmetric (and thus balanced) and perturbing it slightly so as to increase the number of sums while keeping the number of differences the same. Hegarty \cite{hegarty} then posits: ``More interesting, though, would be to have explicit examples of MSTD sets which are, in some meaningful sense, `radically' different from some perturbed symmetric set.''

A new method of explicitly constructing MSTD sets was found by Miller, Orosz and Schneierman \cite{mos}. Their idea is to find sets whose sumset contains all possible sums (i.e., all integers between $2 a_1$ and $2 a_n$). Then by appropriately adding elements to the fringes of such a set, one obtains an MSTD set. This technique was furthered by Zhao \cite{zhao2}, who found a larger class of sets whose sumset is as large as possible. These methods yield densities on the order of $1/n^r$ (the ideas in \cite{mos} yield $r=2$, while those in \cite{zhao2} give $r=1$).

We introduce another ``radically'' different way of constructing MSTD sets. The heuristic behind our techniques is that the property of being an MSTD set should be ``stable'' under small perturbations. In order to make this notion rigorous, we must pass from the realm of the discrete to the realm of the continuous (but we will ultimately return to the discrete setting). Let $\mathbb{I}$ denote the set of all collections of finitely many disjoint open intervals on the real line, and let $\mathbb{I}_n$ denote the set of all collections of $n$ disjoint open intervals on the real line \footnote{As if often the case when dealing with measure theoretic arguments, it does not make a meaningful difference if we use open or closed intervals (or even half-open/half-closed intervals). Certain arguments are cleaner if one uses one or the other, and thus in this paper we shall sometimes assume that the elements of $\mathbb{I}$ and $\mathbb{I}_n$ consist of collections of open intervals, and at other times closed intervals.}. For each $\mathcal{A} \in \mathbb{I}$, we define $\mathcal{A} + \mathcal{A}$ and $\mathcal{A} - \mathcal{A}$ as in the discrete case. However, we are no longer interested in the cardinality of these sets, but rather in the (Lebesgue) measure, $\mu$. We say that $\mathcal{A}$ is difference dominant, sum dominant, or balanced if $\mu(\mathcal{A} - \mathcal{A}) > \mu(\mathcal{A} + \mathcal{A})$, $\mu(\mathcal{A} - \mathcal{A}) < \mu(\mathcal{A} + \mathcal{A})$, or $\mu(\mathcal{A} - \mathcal{A}) = \mu(\mathcal{A} + \mathcal{A})$, respectively.

The foundational result of this paper is the following, which is proven in Section \ref{sec:d2c}.

\begin{theorem}\label{basic:thm}
Let $A \subset \mathbb{Z}$ with $|A| < \infty$. Then, there exists an $\mathcal{A} \in \mathbb{I}$ such that $|A + A| = \mu(\mathcal{A} + \mathcal{A})$ and $|A - A| = \mu(\mathcal{A} - \mathcal{A})$.
\end{theorem}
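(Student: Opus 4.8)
The plan is to replace each integer $a \in A$ by a short interval of length $\varepsilon$ centered at (a scaled copy of) $a$, and to show that for $\varepsilon$ small enough the measures of the sumset and difference set of the resulting collection of intervals are controlled exactly by the cardinalities $|A+A|$ and $|A-A|$. Concretely, write $A = \{a_1 < a_2 < \cdots < a_n\} \subset \mathbb{Z}$, fix a large integer $M$ (to be chosen below), and set
\begin{gather}
\mathcal{A}\ :=\ \bigcup_{i=1}^n \left(M a_i,\ M a_i + 1\right).
\end{gather}
Thus $\mathcal{A} \in \mathbb{I}_n$ consists of $n$ disjoint unit intervals, and the gaps between consecutive intervals are large (of size $\geq M - 1$) compared to the interval length $1$.

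First I would compute $\mathcal{A} + \mathcal{A}$. Each pair $(i,j)$ contributes the interval $(M(a_i+a_j),\ M(a_i+a_j)+2)$; two such intervals coincide exactly when $a_i + a_j = a_k + a_\ell$, and otherwise (if $a_i + a_j \neq a_k + a_\ell$, so these integers differ by at least $1$) they are translates of each other by at least $M$, hence are disjoint once $M \geq 2$. Therefore $\mathcal{A}+\mathcal{A}$ is a disjoint union of exactly $|A+A|$ intervals, each of length $2$, so $\mu(\mathcal{A}+\mathcal{A}) = 2\,|A+A|$. The identical computation applied to $\mathcal{A} - \mathcal{A}$ (whose building blocks are the intervals $(M(a_i - a_j) - 1,\ M(a_i - a_j) + 1)$, again of length $2$ and pairwise either equal or disjoint for $M \geq 2$) gives $\mu(\mathcal{A}-\mathcal{A}) = 2\,|A-A|$.

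This gives the additive behavior up to the constant factor $2$; to get the measures exactly equal to $|A+A|$ and $|A-A|$ as the theorem demands, I would simply rescale: replace $\mathcal{A}$ by $\tfrac{1}{2}\mathcal{A} = \{(\tfrac{M a_i}{2}, \tfrac{M a_i}{2} + \tfrac12) : i \in [n]\}$. Dilation by $\tfrac12$ multiplies the measures of $\mathcal{A} \pm \mathcal{A}$ by $\tfrac12$ while leaving the combinatorial incidence structure (which intervals coincide, which are disjoint) unchanged, so the rescaled collection $\mathcal{A}' \in \mathbb{I}$ satisfies $\mu(\mathcal{A}'+\mathcal{A}') = |A+A|$ and $\mu(\mathcal{A}'-\mathcal{A}') = |A-A|$, as required.

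The only real point requiring care — and the step I would regard as the heart of the argument — is the disjointness claim: one must verify that distinct values of $a_i + a_j$ (respectively $a_i - a_j$) produce genuinely disjoint intervals in the union, so that no ``accidental'' overlaps shrink the measure below the expected value, and conversely that equal values produce exactly coinciding intervals so that no phantom extra measure appears. Since $A \subset \mathbb{Z}$, distinct sums (or differences) differ by at least $1$, so after scaling by $M$ they differ by at least $M$; choosing $M \geq 2$ (any $M$ strictly larger than the interval length $1$) makes the length-$2$ blocks disjoint. This is elementary, but it is where the integrality of $A$ and the choice of a sufficiently large dilation factor are genuinely used, and it is worth stating cleanly as a short lemma before assembling the proof.
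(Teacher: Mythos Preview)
Your argument is correct: dilating $A$ by $M\ge 2$, thickening each point to a unit interval, and then rescaling by $\tfrac12$ does produce an $\mathcal{A}\in\mathbb{I}$ with $\mu(\mathcal{A}\pm\mathcal{A})=|A\pm A|$, and your disjointness check is exactly the right thing to verify. (The parenthetical ``any $M$ strictly larger than the interval length $1$'' is slightly misleading, since the relevant intervals in $\mathcal{A}\pm\mathcal{A}$ have length $2$; but you do impose $M\ge 2$, which is what is needed.)

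The paper takes a genuinely different route. Rather than dilating, it defines the \emph{continuous representation} $A^*$ by first writing $A$ as a union of maximal integer intervals $[b_i,c_i]_{\mathbb Z}$ and then fattening each to $[b_i-\tfrac14,\,c_i+\tfrac14]$. It then proves (via a short chain of propositions about how $[a,b]_{\mathbb Z}^*+[c,d]_{\mathbb Z}^*$ behaves) that $A^*+B^*$ is exactly the union of closed balls of radius $\tfrac12$ around the integers in $A+B$, whence $\mu(A^*+B^*)=|A+B|$. Your construction is more elementary and dispenses with this chain of lemmas entirely. What the paper's construction buys, however, is control over the \emph{number of intervals}: $A^*$ lies in $\mathbb{I}_k$ where $k$ is the number of maximal runs of consecutive integers in $A$, whereas your $\mathcal{A}$ always lies in $\mathbb{I}_{|A|}$. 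This matters downstream: the paper's proof of Theorem~\ref{thm:no_mstd} (existence of MSTD sets in $\mathbb{I}_4$) uses that Hegarty's $9$-element MSTD set $\{0,1,2,4,5,9,12,13,14\}$ has only four runs, so its continuous representation lands in $\mathbb{I}_4$; your construction would place it in $\mathbb{I}_9$. For the bare statement of Theorem~\ref{basic:thm} your proof is cleaner, but the paper's version is the one that feeds into the rest of the argument.
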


We will show that the construction of $\mathcal{A}$ from $A$ is very natural and straightforward.

Theorem \ref{basic:thm} justifies the study of the additive behavior of elements in $\mathbb{I}$ as a means to study the additive behavior of subsets of $\mathbb{Z}$. The space $\mathbb{I}_n$ (and related spaces) have a natural topology, and thus we have the utility of continuity arguments at our disposal in this setting. In the latter half of Section \ref{sec:d2c}, we discuss how to topologize $\mathbb{I}_n$ and related spaces.

In Section \ref{sec:geo}, we introduce a number of tools from discrete geometry to analyze the MSTD question for elements in $\mathbb{I}$. The main result of that section is the following.

\begin{theorem}\label{thm:no_mstd}
For $n \leq 3$, there does not exist $\mathcal{A} \in \mathbb{I}_n$ such that $\mathcal{A}$ is MSTD. For all $n \geq 4$, there do exist MSTD $\mathcal{A} \in \mathbb{I}_n$.
\end{theorem}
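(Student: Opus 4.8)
The plan is to treat the two halves of the statement separately. For a set $\mathcal{A} \in \mathbb{I}_n$ write $\mathcal{A} = \bigcup_{i=1}^n (x_i, x_i + \ell_i)$ with $x_1 < x_1 + \ell_1 \le x_2 < \cdots$; the gaps between consecutive intervals are $g_i := x_{i+1} - (x_i + \ell_i)$ for $i \in [n-1]$. The quantities $\mu(\mathcal{A}+\mathcal{A})$ and $\mu(\mathcal{A}-\mathcal{A})$ depend only on the lengths $\ell_i$ and gaps $g_i$ (translation-invariance), and $\mathcal{A}-\mathcal{A}$ is symmetric about $0$, so it suffices to compare the measure of the ``positive half'' $(\mathcal{A}-\mathcal{A}) \cap (0,\infty)$ against that of $(\mathcal{A}+\mathcal{A}) \cap$ (anything), after normalizing. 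The key structural fact I would isolate first: both $\mathcal{A}+\mathcal{A}$ and $\mathcal{A}-\mathcal{A}$ are finite unions of intervals whose endpoints are explicit $\Z_{\ge 0}$-combinations of the $\ell_i$'s and $g_i$'s, so each measure is a piecewise-linear function of the parameter vector $(\ell_1,\dots,\ell_n,g_1,\dots,g_{n-1}) \in \R^{2n-1}_{>0}$, and the regions of linearity are cut out by finitely many hyperplanes (coincidences among these endpoints). So the sign of $\mu(\mathcal{A}+\mathcal{A}) - \mu(\mathcal{A}-\mathcal{A})$ is constant on each open cone of a polyhedral fan, and one only needs to understand finitely many combinatorial types.

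For the negative direction ($n \le 3$): $n=1$ is immediate ($\mathcal{A}+\mathcal{A}$ and $\mathcal{A}-\mathcal{A}$ are both intervals of length $2\ell_1$). For $n=2$ and $n=3$ I would enumerate the combinatorial types by hand and in each one show $\mu(\mathcal{A}-\mathcal{A}) \ge \mu(\mathcal{A}+\mathcal{A})$. The cleanest packaging: $\mathcal{A}+\mathcal{A} \subseteq [2x_1, 2(x_n+\ell_n)]$, an interval of length $2L$ where $L := (x_n+\ell_n) - x_1$ is the diameter, while $\mathcal{A}-\mathcal{A} \subseteq [-L,L]$, also length $2L$; so the question is which of the two sets misses more of its ambient interval. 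A sum $s$ is \emph{missing} from $\mathcal{A}+\mathcal{A}$ iff it lies in a ``sum-gap,'' and similarly for differences. I would show that for $n\le 3$ every sum-gap can be matched injectively to a difference-gap of at least the same length, using that differences, being built from the same pieces but with sign flips, have ``more room'' to avoid overlap — concretely, for three intervals the sumset gaps occur near $2(x_1+g_1+\ell_2)$-type locations and one checks the finitely many orderings of $\ell_1,\ell_2,\ell_3,g_1,g_2$ directly. This is a bounded but slightly tedious case analysis; I would organize it by which gaps $g_i$ are ``large'' (bigger than the adjacent lengths) versus ``small.''

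For the positive direction ($n \ge 4$): by Theorem~\ref{basic:thm} (or rather its method), a known discrete MSTD set of size $4$ — e.g. take a small MSTD set and scale — furnishes an explicit $\mathcal{A} \in \mathbb{I}_4$ with $\mu(\mathcal{A}+\mathcal{A}) > \mu(\mathcal{A}-\mathcal{A})$; I would just exhibit one four-interval configuration by hand (choosing lengths and gaps, e.g. making the intervals nearly points so the computation reduces to the discrete case where $\{0,2,3,4\}$ or a similar $4$-element MSTD set works, or — better, to get a genuinely ``thick'' example — perturbing such a configuration and invoking the continuity/stability heuristic from the introduction). To pass from $n=4$ to all $n \ge 4$, I would use a padding argument: given an MSTD $\mathcal{A} \in \mathbb{I}_n$, adjoin a tiny interval of length $\varepsilon$ very far to the right (gap $\gg \mathrm{diam}(\mathcal{A})$). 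Then $\mathcal{A}+\mathcal{A}$ gains the translate of $\mathcal{A}+\{$new point$\}$ plus the self-sum of the new interval, a contribution of measure $\mu(\mathcal{A}) + \mu(\mathcal{A}) + 2\varepsilon$ that is disjoint from the old sumset when the gap is large enough; $\mathcal{A}-\mathcal{A}$ gains a symmetric contribution of $2\mu(\mathcal{A}) + 2\varepsilon$, disjoint from the old difference set. One checks the \emph{new} sum-excess equals the old sum-excess, so MSTD is preserved and the interval count goes up by one. Iterating gives all $n \ge 4$.

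The main obstacle is the $n=3$ impossibility: unlike $n \le 2$, three intervals already allow the sumset to have two internal gaps, and one must rule out the possibility that these are collectively longer than all difference-gaps across \emph{every} ordering of the five parameters. I expect the heart of the argument to be a clean injection-with-no-loss from sum-gaps to difference-gaps valid uniformly in the parameters; finding the right such injection (rather than doing $\sim\!30$ cases) is where the real work lies.
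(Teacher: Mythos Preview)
Your proposal has the right overall architecture---the piecewise-linear, polyhedral-fan viewpoint is exactly what the paper develops---but there are genuine gaps in each of the three nontrivial pieces.

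\textbf{The $n=3$ impossibility.} You estimate ``$\sim\!30$ cases'' and hope for a uniform injection from sum-gaps to difference-gaps. The paper carries out precisely the cone enumeration you sketch, but finds over $500$ structure cones for $n=3$ and verifies each by computer (checking that the ``MSTD vector'' $d$ satisfies $d=0$ or $-d\in V^*$ for every cone $V$). No injection is given; the authors in fact pose the existence of a non-casework proof as an open question. Your injection idea may be viable, but as stated it is a hope rather than a plan, and the case count is an order of magnitude larger than you anticipate.

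\textbf{The $n=4$ construction.} There is no $4$-element MSTD subset of $\mathbb{Z}$: Hegarty proved the minimum cardinality is $8$, and one checks directly that $\{0,2,3,4\}$ is difference-dominant. What you actually need (and what the paper does, via Theorem~\ref{basic:thm}) is a discrete MSTD set whose \emph{interval decomposition} has exactly four blocks---for instance $\{0,1,2,4,5,9,12,13,14\}$, which has nine elements but four runs of consecutive integers.

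\textbf{The padding step for $n\ge 5$.} Your computation double-counts on the sum side. If $I$ is the new far-away interval, then by commutativity $\mathcal{A}+I = I+\mathcal{A}$ contributes \emph{once} to the enlarged sumset, so the gain is $\mu(\mathcal{A}+I)+\mu(I+I)\approx \mu(\mathcal{A})$, not $2\mu(\mathcal{A})$. On the difference side $\mathcal{A}-I$ and $I-\mathcal{A}$ are genuinely distinct and disjoint, so the gain really is $\approx 2\mu(\mathcal{A})$. Hence adjoining a far-away interval \emph{decreases} $\mu(\mathcal{A}+\mathcal{A})-\mu(\mathcal{A}-\mathcal{A})$ by roughly $\mu(\mathcal{A})$ and destroys MSTD rather than preserving it. The paper instead \emph{cleaves}: delete a single point from the interior of one open interval, raising the interval count by one while changing both sumset and difference set only on a measure-zero set.
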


This theorem may be loosely interpreted as the continuous analogue of the theorem of Hegarty \cite{hegarty} that there are no MSTD subsets of $\mathbb{Z}$ of cardinality less than 8. 

In addition to allowing us to prove Theorem \ref{thm:no_mstd}, the tools developed in Section \ref{sec:geo} will give us a way of producing an infinite parametrized family of MSTD subsets of $\mathbb{Z}$ from a single MSTD set (either in the discrete or continuous sense). This infinite family will in fact have a simple algebraic structure. As is to be seen, our techniques in some sense allow one to uncover the structure of the set which resulted in it being MSTD and then to systematically enumerate all MSTD sets with this same structure. This is progress towards answering the open-ended question of Nathanson in \cite{nathanson2}: ``What is the structure of finite sets satisfying $|A+A| > |A-A|$?'' These ideas are presented in Section \ref{sec:one2many}.

In Section \ref{sec:c2d} we present a sort of converse result to Theorem \ref{basic:thm}, namely that up to affine transformation, given any $\mathcal{A} \in \mathbb{I}$, we can find an $A \subset \mathbb{Z}$ such that the additive behavior of $\mathcal{A}$ and $A$ are as similar as we like. Finally, in Section \ref{sec:conclusion}, we present some experimental data and pose some open questions and lines of further research.

\section{Discrete to Continuous}\label{sec:d2c}

In the sequel, $A$ always denotes a finite subset of $\mathbb{Z}$. For convenience, in this section we assume that elements of $\mathbb{I}$ consist of closed intervals.

\begin{definition}
Let $a, b \in \mathbb{Z}$ with $a \leq b$. We call the set $[a, b]_\mathbb{Z} := \{a \leq x \leq b | x \in \mathbb{Z}\}$ a \textbf{closed integer interval}.
\end{definition}

\begin{definition}
Let $A = \{a_1, \dots, a_n\}$ with $a_1 < a_2 < \dots < a_n$. The \textbf{interval decomposition} of $A$ is the unique decomposition of $A$ into closed integer intervals $A = [b_1, c_1]_\mathbb{Z} \cup [b_2, c_2]_\mathbb{Z} \cup \dots \cup [b_k, c_k]_\mathbb{Z}$ such that for all $i \neq j$, we have $|b_i - c_j| \geq 2$ (that is, adjacent integers are always grouped into the same closed integer interval; see Example \ref{ex:int_decomp}).
\end{definition}

\begin{example}\label{ex:int_decomp}
Let $A = \{0, 1, 3, 4, 5, 7, 9, 10\}$. Then the interval decomposition of $A$ is 
\begin{gather}
A\ =\ [0, 1]_\mathbb{Z} \cup [3, 5]_\mathbb{Z} \cup [7, 7]_\mathbb{Z} \cup [9, 10]_\mathbb{Z}.
\end{gather}
\end{example}

\begin{definition}
Suppose $A$ has interval decomposition $A = [b_1, c_1]_\mathbb{Z} \cup \dots \cup [b_k, c_k]_\mathbb{Z}$. The \textbf{continuous representation} of $A$, denoted $A^*$, is
\begin{gather}
A^*\ :=\ \left[b_1 - \frac{1}{4}, c_1 + \frac{1}{4}\right] \cup \dots \cup \left[b_k - \frac{1}{4}, c_k + \frac{1}{4}\right].
\end{gather}
\end{definition}

We are now ready to state our main theorem of this section.

\begin{theorem} \label{discrete to continuous}
Let $A$ and $B$ be finite subsets of $\mathbb{Z}$, and $A^*$ and $B^*$ their continuous representations. Let $\mu$ be the Lebesgue measure on the real line. Then it is true that
\begin{gather}
|A + B|\ =\ \mu(A^* + B^*)
\end{gather}
and
\begin{gather}
|A - B|\ =\ \mu(A^* - B^*).
\end{gather}
\end{theorem}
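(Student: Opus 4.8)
The plan is to reduce the claim about $\mu(A^* + B^*)$ and $\mu(A^* - B^*)$ to a bijective correspondence between the integer points of $A+B$ (resp. $A-B$) and unit-length subintervals of $A^* + B^*$ (resp. $A^* - B^*$), so that computing the measure amounts to counting integers. Since $A-B$ is the same as $A + (-B)$ and $A^* - B^* = A^* + (-B)^*$ (one checks $(-B)^* = -(B^*)$ directly from the definition of continuous representation, because negation reverses the interval decomposition), it suffices to prove only the statement for the sumset; the difference statement follows by applying the sumset statement to $A$ and $-B$.

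First I would record the basic structural fact that if $A$ has interval decomposition $\bigcup_i [b_i, c_i]_{\Z}$, then $A^* = \bigcup_i [b_i - \tfrac14, c_i + \tfrac14]$ is a union of closed intervals each of whose endpoints lies in $\Z + \tfrac14$ or $\Z - \tfrac14$, and moreover $A^* \cap [m - \tfrac14, m + \tfrac14] \neq \emptyset$ if and only if $m \in A$ (for $m \in \Z$), with the intersection being the full length-$\tfrac12$ interval in that case. In other words, $A^*$ is exactly the ``fattening'' of $A$ by $\tfrac14$ in each direction, and the gap condition $|b_i - c_j| \geq 2$ in the interval decomposition is precisely what guarantees that distinct fattened intervals of $A^*$ do not touch or overlap — a gap of at least $2$ between integer blocks becomes a gap of at least $2 - \tfrac12 = \tfrac32 > 0$ after fattening. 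The same description applies to $B^*$.

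The key step is then to understand $A^* + B^*$. I claim $A^* + B^* = \bigcup_{s \in A+B} [s - \tfrac12, s + \tfrac12]$. For the inclusion ``$\subseteq$'': any point of $A^* + B^*$ is $x + y$ with $x \in [a - \tfrac14, a + \tfrac14]$ for some $a \in A$ and $y \in [b - \tfrac14, b + \tfrac14]$ for some $b \in B$, hence $x + y \in [(a+b) - \tfrac12, (a+b) + \tfrac12]$ with $a + b \in A+B$. For ``$\supseteq$'': given $s = a + b \in A+B$ and $t \in [s - \tfrac12, s+\tfrac12]$, write $t = s + r$ with $|r| \leq \tfrac12$ and split $r = r_1 + r_2$ with $|r_1|, |r_2| \leq \tfrac14$, so $t = (a + r_1) + (b + r_2) \in A^* + B^*$. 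Now I must compute the measure of $\bigcup_{s \in A+B} [s - \tfrac12, s+\tfrac12]$: consecutive elements of $A+B$ differ by an integer $\geq 1$, so two such unit intervals $[s - \tfrac12, s+\tfrac12]$ and $[s' - \tfrac12, s'+\tfrac12]$ with $s \neq s'$ are either disjoint or share only the single endpoint (when $|s - s'| = 1$), which has measure zero; therefore $\mu\big(\bigcup_{s \in A+B} [s-\tfrac12, s+\tfrac12]\big) = \sum_{s \in A+B} 1 = |A+B|$, as desired.

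The main obstacle — though it is more a bookkeeping point than a genuine difficulty — is verifying cleanly that the fattened blocks of $A^*$ really are the correct ``fattening by $\tfrac14$'' of $A$ and that nothing collapses or merges unexpectedly; this is where the precise definition of interval decomposition (adjacent integers grouped together, gaps of size $\geq 2$ between blocks) earns its keep, and one should state it as an explicit lemma: for $A \subseteq \Z$ finite, $A^* = \{x \in \R : |x - a| \leq \tfrac14 \text{ for some } a \in A\}$, equivalently $A^* + [-\tfrac14,\tfrac14] \cap (\Z) $ recovers $A$. Once that lemma is in hand, the sumset computation above is self-contained, the difference-set case is immediate via $B \mapsto -B$, and the theorem follows. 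I would present the lemma first, then the identity $A^* + B^* = \bigcup_{s\in A+B}[s-\tfrac12,s+\tfrac12]$, then the measure count, then the reduction for differences.
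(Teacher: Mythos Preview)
Your overall strategy---showing that $A^* + B^* = \bigcup_{s \in A+B}[s-\tfrac12, s+\tfrac12]$ and then counting measure---is exactly right, and is essentially the paper's approach (the paper phrases it as $A^*+B^* = \bigcup_{n \in (A^*+B^*)\cap\Z} \overline{B}_{1/2}(n)$ together with $(A^*+B^*)\cap\Z = A+B$). Your reduction of the difference case to the sum case via $(-B)^* = -(B^*)$ is a clean simplification that the paper does not make; the paper instead repeats the sum argument with signs flipped.

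However, there is a genuine error in your key lemma. You claim that $A^* = \{x \in \R : |x-a| \le \tfrac14 \text{ for some } a \in A\}$, i.e.\ that $A^*$ is the $\tfrac14$-fattening of $A$. This is false. Take $A = \{0,1\}$: the interval decomposition is the single block $[0,1]_\Z$, so $A^* = [-\tfrac14, \tfrac54]$, whereas the $\tfrac14$-fattening is $[-\tfrac14,\tfrac14] \cup [\tfrac34,\tfrac54]$. The point $\tfrac12$ lies in $A^*$ but is at distance $\tfrac12$ from every element of $A$. Your ``$\subseteq$'' argument then breaks: you assert that any $x \in A^*$ satisfies $x \in [a-\tfrac14, a+\tfrac14]$ for some $a \in A$, and this is simply not true.

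The fix is to argue at the level of the interval decomposition rather than pointwise. If $x \in A^*$ and $y \in B^*$, then $x \in [b_i - \tfrac14, c_i + \tfrac14]$ and $y \in [d_j - \tfrac14, e_j + \tfrac14]$ for some blocks $[b_i,c_i]_\Z$ of $A$ and $[d_j,e_j]_\Z$ of $B$, so $x+y \in [b_i+d_j - \tfrac12,\, c_i+e_j + \tfrac12]$; since every integer from $b_i+d_j$ to $c_i+e_j$ lies in $A+B$, this last interval equals $\bigcup_{k=b_i+d_j}^{c_i+e_j}[k-\tfrac12,k+\tfrac12] \subseteq \bigcup_{s\in A+B}[s-\tfrac12,s+\tfrac12]$. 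This is precisely what the paper does via its Proposition on single integer intervals. Your ``$\supseteq$'' direction and the measure count are fine as written, since they only use the (true) inclusion $[a-\tfrac14,a+\tfrac14] \subseteq A^*$ for $a \in A$.
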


Before proving this theorem we shall prove a sequence of ancillary propositions. The following proposition is a straightforward exercise.

\begin{proposition} \label{one interval}
Let $A = [a_1, a_2]_\mathbb{Z}$ and $B = [b_1, b_2]_\mathbb{Z}$ with $a_1 < b_1$. Then, the following are true:
\begin{gather}
A+B\ =\ [a_1+b_1, a_2+b_2]_\mathbb{Z}, \\
A^* + B^*\ =\ \left[a_1 + b_1 - \frac{1}{2}, a_2 + b_2 + \frac{1}{2}\right], \\
A - B\ =\ [a_1 - b_2, a_2 - b_1]_\mathbb{Z},
\end{gather}
and
\begin{gather}
A^* - B^*\ =\ \left[a_1 - b_2 - \frac{1}{2}, a_2 - b_1 + \frac{1}{2}\right].
\end{gather}
\end{proposition}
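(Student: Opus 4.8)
The plan is to treat the two continuous identities and the two discrete identities separately, since each reduces to an elementary fact.

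First I would record the shapes of $A^*$ and $B^*$. Since $A = [a_1,a_2]_{\mathbb{Z}}$ and $B = [b_1,b_2]_{\mathbb{Z}}$ are each a single closed integer interval, their interval decompositions are themselves, so by definition $A^* = [a_1 - \tfrac14,\, a_2 + \tfrac14]$ and $B^* = [b_1 - \tfrac14,\, b_2 + \tfrac14]$. For the continuous statements I would then invoke the elementary description of the Minkowski sum and difference of two real intervals, namely $[c,d] + [e,f] = [c+e,\,d+f]$ and $[c,d] - [e,f] = [c-f,\,d-e]$; both follow at once from monotonicity of addition (which forces the extreme values to be sums/differences of endpoints) together with connectedness (which supplies everything in between). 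Substituting $c = a_1 - \tfrac14$, $d = a_2 + \tfrac14$, $e = b_1 - \tfrac14$, $f = b_2 + \tfrac14$ gives $A^* + B^* = [a_1 + b_1 - \tfrac12,\, a_2 + b_2 + \tfrac12]$ and $A^* - B^* = [a_1 - b_2 - \tfrac12,\, a_2 - b_1 + \tfrac12]$, as claimed.

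For the discrete statements, the inclusions $A+B \subseteq [a_1+b_1,a_2+b_2]_{\mathbb{Z}}$ and $A-B \subseteq [a_1-b_2,a_2-b_1]_{\mathbb{Z}}$ are immediate, since $a_1 + b_1 \le a+b \le a_2+b_2$ and $a_1 - b_2 \le a - b \le a_2 - b_1$ for all $a \in A$, $b \in B$, with $a \pm b \in \mathbb{Z}$. For the reverse inclusion for the sumset, given an integer $n$ with $a_1 + b_1 \le n \le a_2 + b_2$, I would set $a := \max(a_1,\, n - b_2)$ and $b := n - a = \min(n - a_1,\, b_2)$, and check directly from the two bounds on $n$ that $a_1 \le a \le a_2$ and $b_1 \le b \le b_2$, so $n = a + b \in A+B$. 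The difference set then follows by applying the sumset identity to $A$ and $-B = [-b_2,-b_1]_{\mathbb{Z}}$, since $A - B = A + (-B)$.

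There is no real obstacle here — this is exactly the "routine exercise" the text advertises. The only point needing a line of care is exhibiting the explicit representation $n = a+b$ in the discrete case (equivalently, the classical observation that the sumset of two integer intervals has no gaps). I would also remark that the hypothesis $a_1 < b_1$ is not actually used: all four identities hold for arbitrary integer intervals, and the hypothesis is retained only to fix an ordering convention used later.
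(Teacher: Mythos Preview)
Your proof is correct and complete. The paper itself does not give a proof of this proposition; it simply declares it ``a straightforward exercise,'' so there is no argument to compare against --- you have supplied exactly the routine verification the authors omit, and your closing remark that the hypothesis $a_1 < b_1$ is unnecessary is also accurate.
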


\begin{corollary}
If $A = [a_1, a_2]_\mathbb{Z}$ and $B = [b_1, b_2]_\mathbb{Z}$, then $|A + B| = \mu(A^* + B^*)$ and $|A - B| = \mu(A^* - B^*)$.
\end{corollary}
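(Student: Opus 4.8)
The plan is to read off the corollary from Proposition \ref{one interval} with only a short reduction to handle its hypotheses. First I would dispose of the ordering assumption: addition is commutative, so $A+B = B+A$ and $A^* + B^* = B^* + A^*$, while $A - B = -(B - A)$ and $A^* - B^* = -(B^* - A^*)$; since $x \mapsto -x$ preserves both cardinality and Lebesgue measure, each of $|A \pm B|$ and $\mu(A^* \pm B^*)$ is unchanged under swapping $A$ and $B$. Hence we may assume $a_1 \leq b_1$.

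If $a_1 < b_1$, Proposition \ref{one interval} applies verbatim and supplies explicit closed integer intervals for $A \pm B$ together with explicit closed real intervals for $A^* \pm B^*$. It then remains only to use the elementary identity that $[m, M]_\mathbb{Z}$ contains $M - m + 1$ integers whereas $\mu([m - \frac{1}{2}, M + \frac{1}{2}]) = M - m + 1$. Applying this with $(m, M) = (a_1 + b_1, a_2 + b_2)$ for the sumset and $(m, M) = (a_1 - b_2, a_2 - b_1)$ for the difference set yields $|A + B| = \mu(A^* + B^*)$ and $|A - B| = \mu(A^* - B^*)$.

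The one situation not literally covered by the proposition is the boundary case $a_1 = b_1$, which I would treat by the same direct computation. Since each of $A$ and $B$ is a single closed integer interval, its interval decomposition is trivial, so $A^* = [a_1 - \frac{1}{4}, a_2 + \frac{1}{4}]$ and $B^* = [b_1 - \frac{1}{4}, b_2 + \frac{1}{4}]$; sums and differences of closed real intervals are computed endpoint-by-endpoint, giving $A^* + B^* = [a_1 + b_1 - \frac{1}{2}, a_2 + b_2 + \frac{1}{2}]$ and $A^* - B^* = [a_1 - b_2 - \frac{1}{2}, a_2 - b_1 + \frac{1}{2}]$, while $A + B = [a_1 + b_1, a_2 + b_2]_\mathbb{Z}$ and $A - B = [a_1 - b_2, a_2 - b_1]_\mathbb{Z}$ remain genuine integer intervals. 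The same counting identity then closes this case. I do not expect a real obstacle; the only things to watch are the $\pm \frac{1}{2}$ versus $+1$ bookkeeping and the fully degenerate sub-case $a_1 = a_2$ (or $b_1 = b_2$), where the formulas still hold because a one-point integer interval has a continuous representation of length $\frac{1}{2}$.
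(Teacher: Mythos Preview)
Your proof is correct and follows the same approach the paper takes, namely reading the corollary directly off of Proposition~\ref{one interval}; indeed the paper states the corollary without proof as an immediate consequence. Your handling of the symmetry reduction and the boundary case $a_1 = b_1$ is more careful than what the paper makes explicit, but the underlying idea is identical.
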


\begin{proposition} \label{which n}
Let $A$ and $B$ be finite subsets of $\mathbb{Z}$. Let $n$ be in $\mathbb{Z}$. Then $n$ is in $A+B$ if and only if $n$ is in $A^* + B^*$. Similarly, $n$ is in $A-B$ if and only if $n$ is in $A^* - B^*$. 
\end{proposition}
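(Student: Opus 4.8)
The plan is to treat each direction separately: the "only if" implications are immediate, while the "if" implications reduce, via distributivity of Minkowski sum/difference over unions, to the single-interval statements already recorded in Proposition \ref{one interval}.

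First I would note that $A \subseteq A^*$ and $B \subseteq B^*$ directly from the definition of the continuous representation, since every integer $a \in [b_i, c_i]_\mathbb{Z}$ lies in $[b_i - \frac14, c_i + \frac14]$. Hence if $n = a + b$ with $a \in A$ and $b \in B$, then $n \in A^* + B^*$, and likewise if $n = a - b$ then $n \in A^* - B^*$. This settles the forward directions with no estimates needed.

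For the reverse directions, I would first record that Minkowski sum and difference distribute over finite unions. Writing the interval decompositions as $A = \bigcup_{i=1}^{k} [b_i, c_i]_\mathbb{Z}$ and $B = \bigcup_{j=1}^{\ell} [b'_j, c'_j]_\mathbb{Z}$, we get $A \pm B = \bigcup_{i,j} \big([b_i, c_i]_\mathbb{Z} \pm [b'_j, c'_j]_\mathbb{Z}\big)$, and since the continuous representation of a single integer interval is exactly the corresponding padded real interval, $A^* \pm B^* = \bigcup_{i,j} \big([b_i - \tfrac14, c_i + \tfrac14] \pm [b'_j - \tfrac14, c'_j + \tfrac14]\big)$. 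So it suffices to show: for single integer intervals $I = [p,q]_\mathbb{Z}$ and $J = [r,s]_\mathbb{Z}$, every integer in $I^* + J^*$ lies in $I + J$, and every integer in $I^* - J^*$ lies in $I - J$. By Proposition \ref{one interval} (applied after relabeling so the ordering hypothesis holds, and trivially when $p = r$ or $I = J$), $I^* + J^* = [p + r - \tfrac12, q + s + \tfrac12]$ and $I^* - J^* = [p - s - \tfrac12, q - r + \tfrac12]$. The integers contained in $[p + r - \tfrac12, q + s + \tfrac12]$ are precisely $\{p+r, p+r+1, \dots, q+s\} = [p,q]_\mathbb{Z} + [r,s]_\mathbb{Z}$, because widening each endpoint by $\tfrac12$ does not reach the next integer; the same estimate handles the difference. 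Taking the union over all pairs $(i,j)$ gives the claim.

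I expect there is no genuinely hard step here: the content is the elementary "fringe" observation that the $\pm\tfrac14$ padding in the continuous representation turns into a $\pm\tfrac12$ displacement under addition or subtraction, which stays strictly inside the unit gap between consecutive integers, so no spurious integers are introduced. The only things requiring a bit of care are the bookkeeping of the interval decompositions and the harmless reduction to the ordered single-interval case covered by Proposition \ref{one interval}.
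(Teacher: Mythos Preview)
Your proof is correct and follows essentially the same route as the paper: both arguments reduce via the distributivity of Minkowski sum/difference over the interval decompositions to the single-interval case handled by Proposition~\ref{one interval}, and then observe that the $\pm\tfrac12$ padding cannot introduce any new integers. Your forward direction is marginally cleaner, using the containment $A\subseteq A^*$ directly rather than invoking Proposition~\ref{one interval} again, but this is a cosmetic difference.
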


\begin{proof}
This follows almost immediately from Proposition \ref{one interval}. Let $A = [a_1, b_1]_\mathbb{Z} \cup \dots \cup [a_k, b_k]_\mathbb{Z}$ and $B = [c_1, d_1]_\mathbb{Z} \cup \dots \cup [c_\ell, d_\ell]_\mathbb{Z}$ be the interval decompositions of $A$ and $B$ respectively. Suppose $n \in A + B$. This clearly can only happen if $n \in [a_i, b_i]_\mathbb{Z} + [c_j, d_j]_\mathbb{Z}$ for some $i$ and $j$. By Proposition \ref{one interval}, letting $A' = [a_i, b_i]_\mathbb{Z}$ and $B' = [c_j, d_j]_\mathbb{Z}$, we know that $n \in A'^* + B'^*$ and therefore $n$ is also in $A^* + B^*$. 

Now suppose that $n \in A^* + B^*$ with $n \in \mathbb{Z}$. This implies that $n \in [a_i, b_i]_\mathbb{Z}^* + [c_j, d_j]_\mathbb{Z}^*$ for some $i$ and $j$. By Proposition \ref{one interval}, this implies that $n \in [a_i, b_i]_\mathbb{Z} + [c_j, d_j]_\mathbb{Z}$ since $([a_i, b_i]_\mathbb{Z}^* + [c_j, d_j]_\mathbb{Z}^*) \cap \mathbb{Z} = [a_i, b_i]_\mathbb{Z} + [c_j, d_j]_\mathbb{Z}$. Thus we get that $n \in A+B$.

By switching the plus signs above to minus signs, we obtain a proof for the second half of the proposition statement.
\end{proof}

\begin{proposition} \label{cover}
Let $A$ and $B$ be finite subsets of $\mathbb{Z}$. Let $C_{A^* + B^*}$ and $C_{A^* - B^*}$ be defined as
\begin{gather}
C_{A^* + B^*}\ :=\ \bigcup_{n \in (A^* + B^*) \cap \mathbb{Z}} \overline{B}_{\frac{1}{2}}(n) 
\end{gather}
and
\begin{gather}
C^*_{A^* - B^*}\ :=\ \bigcup_{n \in (A^* - B^*) \cap \mathbb{Z}} \overline{B}_{\frac{1}{2}}(n).
\end{gather}
where $\overline{B}_{\frac{1}{2}}(n)$ is the closed ball of radius $\frac{1}{2}$ centered at $n$. Then $C_{A^* + B^*} = A^* + B^*$ and $C_{A^* - B^*} = A^* - B^*$. 
\end{proposition}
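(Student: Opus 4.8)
The plan is to reduce everything to the single-interval computation of Proposition \ref{one interval}, together with the elementary observation that for integers $k \le \ell$ the closed interval $\left[k - \frac{1}{2},\, \ell + \frac{1}{2}\right]$ is exactly the union $\bigcup_{n = k}^{\ell} \overline{B}_{1/2}(n)$ of closed radius-$\frac{1}{2}$ balls centered at the consecutive integers $k, k+1, \dots, \ell$ (adjacent balls meet at the half-integers). I will carry out the sumset case in detail; the difference set is handled identically after switching the plus signs to minus signs and using the corresponding formulas in Proposition \ref{one interval}.

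First I would fix the interval decompositions $A = [a_1,b_1]_\mathbb{Z} \cup \dots \cup [a_k,b_k]_\mathbb{Z}$ and $B = [c_1,d_1]_\mathbb{Z} \cup \dots \cup [c_\ell,d_\ell]_\mathbb{Z}$, so that $A^* = \bigcup_i \left[a_i - \frac{1}{4}, b_i + \frac{1}{4}\right]$ and $B^* = \bigcup_j \left[c_j - \frac{1}{4}, d_j + \frac{1}{4}\right]$. Since forming a sumset distributes over finite unions, $A^* + B^* = \bigcup_{i,j}\left([a_i,b_i]_\mathbb{Z}^* + [c_j,d_j]_\mathbb{Z}^*\right)$, and Proposition \ref{one interval} identifies each summand as the interval $\left[a_i + c_j - \frac{1}{2},\, b_i + d_j + \frac{1}{2}\right]$. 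By the elementary observation above, this summand equals $\bigcup_{n \in [a_i+c_j,\, b_i+d_j]_\mathbb{Z}} \overline{B}_{1/2}(n)$. Taking the union over all $i, j$ then gives $A^* + B^* = \bigcup_{n \in S} \overline{B}_{1/2}(n)$, where $S := \bigcup_{i,j}[a_i+c_j,\, b_i+d_j]_\mathbb{Z}$.

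It then remains only to identify the index set $S$ with $(A^* + B^*) \cap \mathbb{Z}$. One inclusion is immediate, since every element of $S$ is an integer lying in $A^* + B^*$; for the reverse, any integer in $A^* + B^* = \bigcup_{i,j}\left[a_i+c_j - \frac{1}{2},\, b_i+d_j+\frac{1}{2}\right]$ lies in some $\left[a_i+c_j - \frac{1}{2},\, b_i+d_j+\frac{1}{2}\right]$, hence, being an integer, in $[a_i+c_j,\, b_i+d_j]_\mathbb{Z} \subseteq S$. Alternatively, one can simply invoke Proposition \ref{which n}, which already gives $(A^* + B^*)\cap\mathbb{Z} = A + B$, together with Proposition \ref{one interval} to see $A + B = S$. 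Substituting $S = (A^* + B^*)\cap\mathbb{Z}$ into the displayed identity yields $A^* + B^* = C_{A^* + B^*}$, and the same argument with minus signs yields $A^* - B^* = C_{A^* - B^*}$.

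I do not anticipate a serious obstacle here; the content is bookkeeping built on Proposition \ref{one interval}. The one point that deserves care is that the single-interval pieces $[a_i,b_i]_\mathbb{Z}^* + [c_j,d_j]_\mathbb{Z}^*$ need not be pairwise disjoint — consecutive pieces can overlap, or even merge into one longer interval — but this causes no difficulty, because both sides of the claimed identity are unions over the pairs $(i,j)$ and the ``union of radius-$\frac{1}{2}$ balls around integers'' description is preserved under taking unions. A second, very minor, point is the boundary behavior at half-integers when passing between $\left[k - \frac{1}{2}, \ell + \frac{1}{2}\right]$ and $\bigcup_{n=k}^{\ell} \overline{B}_{1/2}(n)$; since all the intervals and balls involved are closed, the endpoints are included on both sides and nothing is lost.
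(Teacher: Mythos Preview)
Your proposal is correct and follows essentially the same route as the paper: reduce to the single-interval case via Proposition~\ref{one interval}, observe that each $[a_i,b_i]_\mathbb{Z}^* + [c_j,d_j]_\mathbb{Z}^*$ is a union of closed radius-$\tfrac{1}{2}$ balls around its integer points, and then take the union over all pairs $(i,j)$. Your version is in fact more explicit than the paper's, which compresses the argument into the chain $C_{A^*+B^*} = \bigcup_{i,j} C_{I_i^*+J_j^*} = \bigcup_{i,j} (I_i^*+J_j^*) = A^*+B^*$ without spelling out the identification of the index set with $(A^*+B^*)\cap\mathbb{Z}$.
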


\begin{proof}
Notice that this statement is clearly true in the case that $A = [a_1, a_2]_\mathbb{Z}$ and $B = [b_1, b_2]_\mathbb{Z}$ from Proposition \ref{one interval}. Suppose now that $A = \bigcup_{i = 1}^k I_i$ and $B = \bigcup_{j = 1}^\ell J_j$ where the $I_i$ are the integer intervals in the interval decomposition of $A$, and the $J_j$ are the integer intervals in the interval decomposition of $B$. Thus
\begin{equation}
\begin{split}
C_{A^* + B^*} & \ =\ \bigcup_{1 \leq i \leq k, 1 \leq j \leq \ell} C_{I_i^* + J_j^*} \\
	&\ =\  \bigcup_{1 \leq i \leq k, 1 \leq j \leq \ell} I_i^* + J_j^* \\
	&\ =\ A^* + B^*. 
\end{split}
\end{equation}
As before, by changing plus signs to minus signs we get a proof for the latter part of the proposition statement.
\end{proof}

We are now ready to prove Theorem \ref{discrete to continuous}.

\begin{proof}[Proof of Theorem \ref{discrete to continuous}]
By Proposition \ref{cover}, we know that $C_{A^* + B^*} = A^* + B^*$. The set $C_{A^* + B^*}$ is composed to sets of measure 1 such that the intersection of any pair of them is either empty or a single point. This implies that the measure of the intersection of any pair of these sets is 0. We can therefore conclude that $\mu(A^* + B^*) = \#\{(A^* + B^*) \cap \mathbb{Z}\}$. However, by Proposition \ref{which n}, we know that $\#\{(A^* + B^*) \cap \mathbb{Z}\} = |A + B|$. Therefore, $\mu(A^* + B^*) = |A + B|$. Showing that $\mu(A^* - B^*) = |A - B|$ follows analogously.
\end{proof}

The above results show that by studying sumsets and difference sets for collections of intervals, we can retrieve results about collections of intervals as a special case. However, the power of instead studying collections of intervals is that, as we continuously vary the endpoints of our intervals, the size of the sumset and of the difference set also vary continuously. Therefore, for example, given a single MSTD collection of intervals, we can vary this the endpoints of these intervals slightly and still have an MSTD set.

In general, rather than deal with $\mathbb{I}$, we shall fix some $n$ and deal with $\mathbb{I}_n$. Since additive behavior (in particular the property of being MSTD) is invariant under affine transformation, modding out by affine equivalence does not change $\mathbb{I}_n$ in a meaningful way. With this in mind, there are several natural ways to topologize $\mathbb{I}_n$ and its quotient by some or all of the affine group. These fall into two broad categories of parametrizations: \textbf{endpoint parametrizations} and \textbf{interval-gap parametrizations}.

Endpoint parametrizations refer to subsets of some Euclidean space where each component of a vector in the space is either a left or right endpoint for some interval on the real line. The \textbf{free simplex model} is the subset of $\mathbb{R}^{2n}$ composed of vectors of the form $(a_1, b_1, \dots, a_n, b_n)$ with the condition that $a_1 \leq b_1 \leq \dots \leq a_n \leq b_n$. We think of this vector as representing the set $[a_1, b_1] \cup [a_2, b_2] \cup \dots \cup [a_n, b_n]$. This model is a parametrization of all of $\mathbb{I}_n$; we have not modded out by any affine equivalences. This model will be particularly useful in the next section.

One disadvantage of the above model is that the space is not compact. A similar model, which we call the \textbf{simplex model} is the subset of $\mathbb{R}^{2n}$ composed of vectors $(a_1, b_1, \dots, a_n, b_n)$ with the condition that $0 \leq a_1 \leq b_1 \leq \dots \leq b_1 \leq b_2 \leq 1$. This model is named as such because the points in this space all live in a $2n$-dimensional simplex.

Yet another disadvantage of both of the above models is that there is some redundancy: up to affine transformation we still have several representatives for the same set. We can mod out by all affine transformations (with positive determinant) by requiring that our leftmost interval start at 0 and our rightmost interval end at 1. We thus define the \textbf{restricted simplex model} to be those vectors $(b_1, a_2, b_2, \dots, a_{n-1}, b_{n-1}, a_n)$ in $\mathbb{R}^{2n-2}$ such that $0 \leq b_1 \leq a_2 \dots \leq b_{n-1} \leq a_n \leq 1$ with the understanding that this vector corresponds to the collection of intervals $[0, b_1] \cup [a_2, b_2] \cup \dots \cup [a_n, 1]$.

Another class of natural parametrizations we call interval-gap parametrizations; in these, instead of designating where intervals begin and end, we designate how long each interval is and how long the gaps between consecutive intervals are (thus we have already modded out by translations). The \textbf{free cube model} consists of vectors $(\ell_1, g_1, \dots, g_{n-1}, \ell_n)$ in $\mathbb{R}^{2n-1}$ such that $\ell_i,g_i \geq 0$ for all $i$. Given such a vector, we construct a collection of $n$ intervals as follows: the leftmost interval is $[0, \ell_1]$. The gap between the leftmost interval and its neighboring interval to the right is $g_1$ and the length of this next interval is $\ell_2$, and so on. 

The above model has a natural compactification which we call the \textbf{unit cube model} in which we require that $0 \leq \ell_i, g_i \leq 1$ for all $i$ (so that we may think of our points as living in the unit cube in $\mathbb{R}^{2n-1}$). Though we do not use this model in this paper, this model was utilized in \cite{ballot_polytope} to give a geometric interpretation to the enumeration and growth rate of bidirectional ballot sequences/$(1, 1)$-culminating paths (these sequences were used by Zhao in his constructions of MSTD sets).

Lastly, analogously to the restricted simplex model, we can mod out by all affine transformations to get the \textbf{restricted unit cube model}. That is, we can additionally require that $\ell_1 + g_1 + \dots + \ell_n = 1$. This model is essentially the same as the restricted simplex model.

\section{A Geometric Perspective}\label{sec:geo}

The main goal of this section is to prove Theorem \ref{thm:no_mstd}. However, as we shall see, in doing so we shall develop a powerful set of tools for analyzing continuous sumsets and difference sets. These tools will end up being useful for studying subsets of $\mathbb{Z}$ as well.

In the sequel we shall generally let $n$ be fixed, and let $J$ represent some element in $\mathbb{I}_n$ consisting of closed intervals. Thus, $J = \{J_1, \dots, J_n\}$ where $J_i = [x_i, y_i]$, and for $i < j$, $J_i$ is to the left of $J_j$ on the number line. Vectors will be denoted by parentheses (i.e., $[x, y]$ is an interval and $(x, y)$ is a vector). 

To handle the $n=1$ case of Theorem \ref{thm:no_mstd} is more or less trivial. Already with the $n=2$ case some work is required; the analysis of the $n=2$ case reveals most of the important ideas that go into the $n \geq 3$ case, and we choose to analyze the $n=2$ case in such a way that the core ideas are clearest, rather than using the more powerful but harder to visualize framework used in the $n \geq 3$ case.

\begin{lemma}
There are no MSTD sets in $\mathbb{I}_1$.
\end{lemma}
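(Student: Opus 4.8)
The plan is to reduce to a single interval and compute both the sumset and the difference set explicitly. By modding out by affine transformations (which preserve additive behavior), any $\mathcal{A} \in \mathbb{I}_1$ consists of a single closed interval, and after an affine change of coordinates we may take $\mathcal{A} = [0,1]$.

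First I would note that for $\mathcal{A} = [a,b]$ with $a < b$, the sumset is $\mathcal{A} + \mathcal{A} = [2a, 2b]$, so $\mu(\mathcal{A}+\mathcal{A}) = 2(b-a)$. Next, the difference set is $\mathcal{A} - \mathcal{A} = [a - b, b - a]$, so $\mu(\mathcal{A} - \mathcal{A}) = 2(b-a)$ as well. Hence $\mu(\mathcal{A}+\mathcal{A}) = \mu(\mathcal{A} - \mathcal{A})$ for every $\mathcal{A} \in \mathbb{I}_1$, so in fact every element of $\mathbb{I}_1$ is balanced, and in particular none is MSTD. (These sumset/difference set identities are exactly the single-interval content of Proposition \ref{one interval}, applied with $A = B$ in the continuous setting; alternatively they are immediate from the fact that the Minkowski sum of intervals is an interval whose length is the sum of the lengths.)

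There is essentially no obstacle here: the only thing to be slightly careful about is the degenerate case where the interval has length zero, i.e. $\mathcal{A}$ is a single point, in which case both the sumset and difference set are also single points of measure zero, so the conclusion still holds. One could phrase the whole argument in one line by observing that for any interval $I$ of length $\ell$, both $I + I$ and $I - I$ are intervals of length $2\ell$.
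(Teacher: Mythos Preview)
Your proof is correct and follows essentially the same approach as the paper: both compute directly that for a single interval $[a,b]$ the sumset $[2a,2b]$ and difference set $[a-b,b-a]$ each have measure $2(b-a)$, so every element of $\mathbb{I}_1$ is balanced. The affine normalization and the degenerate-point case you include are harmless extras but not needed.
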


\begin{proof}
Suppose $J = [x_1, y_1]$. The sumset consists of a single interval, $[x_1 + x_1, y_1 + y_1]$, which has length $2 y_1 - 2 x_1$. The difference set also consists of a single interval, $[x_1 - y_1, y_1 - x_1]$, which has length $2 y_1 - 2 x_1$. Thus, the length of the sumset is exactly equal to the length of the difference set, so in all cases $J$ is balanced. Notice that this analysis reveals that there is only one type of behavior in the $n=1$ case, which is to be expected since all intervals are equivalent mod affine transformations.
\end{proof}


The $n=2$ case requires some more work. As has been previously discussed, $J+J$ can be expressed as the union over all $i$ and $j$ of intervals of the form $J_i + J_j$ (and similarly for the difference set). If we know the locations of all intervals of the form $J_i + J_j$ (or $J_i - J_j$) relative to each other, then we know exactly what the sumset (or difference set) is. More precisely, if we know the total ordering on the left and right endpoints of all intervals of the form $J_i + J_j$ (or $J_i - J_j$), then we know exactly which points are in $J+J$ (or $J-J$), and additionally, we know the measure of $J+J$. This motivates the following definition.

\begin{definition}
Given $J$, the total ordering on the left and right endpoints of intervals of the form $J_i + J_j$ ($J_i - J_j$) is called the \textbf{structure} of the sumset (difference set). The structure of $J$ refers to the structure of both the sumset and the difference set.
\end{definition}

Thus, stated succinctly, the above observations say that if we know the structure of the sumset/difference set, then we know exactly what the sumset/difference set is.

Another notational definition which will make analysis easier is the following:
\begin{definition}
Let $(J_i \pm J_j)_L$ and $(J_i \pm J_j)_R$ denote the left and right endpoints respectively of the interval $J_i \pm J_j$. 
\end{definition}

\begin{lemma}
There are no MSTD sets in $\mathbb{I}_2$
\end{lemma}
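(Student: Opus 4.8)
The plan is to parametrize an arbitrary $J = \{J_1, J_2\} \in \mathbb{I}_2$ using the free simplex model: write $J_1 = [x_1, y_1]$ and $J_2 = [x_2, y_2]$ with $x_1 \le y_1 \le x_2 \le y_2$. Since being MSTD is invariant under affine transformations with positive determinant, I may normalize, say, $x_1 = 0$ and $y_2 = 1$, leaving two free parameters $y_1$ and $x_2$ with $0 \le y_1 \le x_2 \le 1$ (the restricted simplex model). The sumset $J+J$ is the union of the three intervals $J_1 + J_1$, $J_1 + J_2$, and $J_2 + J_2$ (using commutativity), while the difference set $J - J$ is the union of $J_1 - J_1$, $J_2 - J_2$, $J_1 - J_2$, and $J_2 - J_1 = -(J_1 - J_2)$; note $J - J$ is symmetric about $0$. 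I would write down the endpoints of all these intervals explicitly as affine functions of the parameters.

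The key step is to enumerate the possible \textbf{structures} of the sumset and difference set — that is, the possible total orderings of the endpoints of these constituent intervals. The parameter space is a triangle (a $2$-dimensional polytope), and the hyperplanes along which two endpoints coincide subdivide it into finitely many open regions, on each of which the structure is constant. On each such region, $\mu(J+J)$ and $\mu(J-J)$ are given by fixed affine (in fact piecewise-linear) formulas in the parameters, obtained by the inclusion–exclusion / ordering bookkeeping that the ``structure determines the set'' observation makes routine. I would then check, region by region, that $\mu(J-J) \ge \mu(J+J)$ always holds. A cleaner way to organize this: show that $J_1 + J_2$ always has length equal to that of $J_1 - J_2$ (both equal $(y_1 - x_1) + (y_2 - x_2)$), that $J_i + J_i$ has the same length as $J_i - J_i$ for each $i$, and that the ``overlaps'' that get subtracted off in the sumset are always at least as large as the corresponding overlaps in the difference set — intuitively because the difference set's pieces are more spread out and symmetric, so they overlap less. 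Making that overlap comparison precise is the crux.

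The main obstacle I anticipate is handling the case analysis compactly without it ballooning: a priori there are many orderings of six sumset endpoints and eight difference-set endpoints, and one wants to avoid checking all of them by hand. I would try to cut this down by exploiting the symmetry of $J - J$ about $0$ (halving the work on the difference side) and by noting that the relevant quantity on the sum side is just whether/how much the three intervals $[2x_1, 2y_1]$, $[x_1 + x_2, y_1 + y_2]$, $[2x_2, 2y_2]$ overlap, which is controlled by comparing $2y_1$ with $x_1 + x_2$ and $y_1 + y_2$ with $2x_2$; similarly on the difference side one compares $y_1 - x_1$ (the right endpoint of $J_1 - J_1$, centered at $0$) and $y_2 - x_2$ with $y_1 - x_2$ (the relevant endpoint of $\pm(J_1 - J_2)$). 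Reducing everything to a handful of inequalities among $x_1, y_1, x_2, y_2$ and then verifying $\mu(J-J) - \mu(J+J) \ge 0$ in each of the resulting few cases should finish the proof; I expect the key inequality in each case to follow from $x_1 \le y_1 \le x_2 \le y_2$ by elementary manipulation.
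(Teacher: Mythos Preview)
Your proposal is correct and follows essentially the same approach as the paper: normalize via the restricted simplex model to a two-parameter triangle $0 \le y_1 \le x_2 \le 1$, identify the handful of comparison inequalities (on the sum side $2y_1 \lessgtr x_2$ and $1+y_1 \lessgtr 2x_2$; on the difference side these reappear together with $1-x_2 \lessgtr y_1$) that determine the structure, and verify on each of the resulting six regions that $\mu(J-J) \ge \mu(J+J)$. Your suggested ``overlap comparison'' shortcut is a nice intuition but the paper simply carries out the direct region-by-region check, which is exactly what you outline as the main plan.
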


\begin{proof}
Let $J = J_1 \cup J_2$. If we use the free simplex model, then $J$ has four degrees of freedom. However, if we instead use the restricted simplex model, then $J$ only has two degrees of freedom, so all possible cases for the structure of $J$ can be readily visualized. Using this model, we can represent a given $J$ as a point in $\mathbb{R}^2$: if $J = \{[0, y_1], [x_2, 1]\}$, we represent this as the point $\overline{J} = (y_1, x_2) \in \mathbb{R}^2$. The restricted simplex model restricts to the region in the plane simultaneously satisfying the following inequalities:
\begin{gather}
y_1 \geq 0, \label{ieq1}\\
x_2 \leq 1, \\
y_1 \leq x_2. \label{ieq2}
\end{gather}
We call this region $A$.

First we see what information we need to figure out the structure of $J+J$. There are three intervals we are concerned with: $J_1 + J_1$, $J_1 + J_2$ and $J_2 + J_2$. Note that since $J_1$ is to the left of $J_2$, we immediately know that:
\begin{gather}
(J_1 + J_1)_L\ \leq\ (J_1 + J_2)_L\ \leq\ (J_2 + J_2)_L, \nonumber\\
(J_1 + J_1)_R\ \leq\ (J_1 + J_2)_R\ \leq\ (J_2 + J_2)_R, \nonumber\\
(J_1 + J_1)_L\ \leq\ (J_1 + J_1)_R, \nonumber\\
(J_1 + J_2)_L\ \leq\ (J_1 + J_2)_R, \nonumber\\
(J_2 + J_2)_L\ \leq\ (J_2 + J_2)_R.
\end{gather}

Thus to figure out the structure of $J+J$, we only need the following information:
\begin{gather}
(J_1+J_1)_R \stackrel{?}{\ \leq\ } (J_1+J_2)_L \nonumber\\
(J_1+J_2)_R \stackrel{?}{\ \leq\ } (J_2+J_2)_L.
\end{gather}

These two inequalities in question are equivalent to knowing on which side of the following lines in the plane our point $\overline{J}$ lies:
\begin{gather}
y_1 + y_1\ =\ 0 + x_2 \label{line1} \\
y_1 + 1\ =\ x_2 + x_2.  \label{line2}
\end{gather}

We now turn to determining the structure of $J-J$. From equations \eqref{ieq1} to \eqref{ieq2}, we already know:
\begin{gather}
(J_1 - J_2)_L\ \leq\ (J_1 - J_1)_L\ \leq\ (J_1 - J_1)_R\ \leq\ (J_2 - J_1)_R \nonumber\\
(J_1 - J_2)_L\ \leq\ (J_2 - J_2)_L\ \leq\ (J_2 - J_2)_R\ \leq\ (J_2 - J_1)_R \nonumber\\
(J_1 - J_2)_L\ \leq\ (J_1 - J_2)_R\ \leq\ (J_2 - J_1)_L\ \leq\ (J_2 - J_1)_R.
\end{gather}

By the symmetry of the difference set, we only need to know the following information to determine the structure of the difference set:
\begin{gather}
(J_1 - J_1)_R \stackrel{?}{\ \leq\ } (J_2 - J_2)_R \nonumber\\
(J_1 - J_1)_R \stackrel{?}{\ \leq\ } (J_2 - J_1)_L \nonumber\\
(J_2 - J_2)_L \stackrel{?}{\ \leq\ } (J_2 - J_1)_L.
\end{gather}

These three inequalities in question are equivalent to knowing on which side of the following lines in the plane our point $\overline{J}$ lies:
\begin{gather}
y_1 - 0\ =\ 1 - x_2 \label{line3}\\
y_1 - 0\ =\ x_2 - y_1 \label{line4}\\
1 - x_2\ =\ x_2 - y_1. \label{line5}
\end{gather}

However notice that equation \ref{line4} is the same as equation \ref{line1}, and also \ref{line5} is the same as \ref{line2}. Thus for points within $A$, by knowing on which side of the three lines given by equations \ref{line1}, \ref{line2}, and \ref{line3} our point $\overline{J}$ lies, we completely know the structure of $J$. All of the above information is captured geometrically in Figure \ref{fig:n2case}. We see that $A$ is divided into 6 regions such that all points in the same region have the same structure. Table \ref{table:huge} at the end of this paper enumerates which structure each of these regions corresponds to.

\begin{figure}
\begin{tikzpicture} [scale=7]
\draw [ultra thick] (0, 0) to (0, 1);
\draw [ultra thick] (0, 0) to (1, 1);
\draw [ultra thick] (0, 1) to (1, 1);
\draw [->] (0, 0) to (0, 1.2);
\draw [->] (0, 0) to (1.2, 0);
\draw (0, 0) to (0.5, 1);
\draw (0, 0.5) to (1, 1);
\draw (0, 1) to (0.5, 0.5);
\draw [->] (0.75, 1.08) to (0.75, 1.01);
\node [above] at (0.75, 1.08) {$x_2 = 1$};
\node [below] at (0.3, 0) {$y_1$ axis };
\node [left] at (0, 0.3) {$x_2$ axis};
\draw [->] (-0.08, 0.75) to (-0.01, 0.75);
\node [left] at (-0.08, 0.75) {$y_1 = 0$};
\draw [<-] (0.76, 0.74) to (0.8, 0.7);
\node [right] at (0.79, 0.69) {$x_2 = y_1$};
\draw [dotted] (0.5, 1) to (0.6, 1.2);
\draw [dotted] (0, 0) to (-0.1, -0.2);
\draw [dotted] (0, 1) to (-0.15, 1.15);
\draw [dotted] (0.5, 0.5) to (0.65, 0.35);
\draw [dotted] (0, 0.5) to (-0.2, 0.4);
\draw [dotted] (1, 1) to (1.2, 1.1);
\draw [<-] (0.54, 1.1) to (0.46, 1.1);
\node [left] at (0.46, 1.1) {$2 y_1 = x_2$};
\draw [<-] (1.1, 1.04) to (1.1, 0.96);
\node [below] at (1.1, 0.96) {$1 + y_1 = 2 x_2$};
\draw [<-] (0.57, 0.44) to (0.64, 0.44);
\node [right] at (0.64, 0.44) {$1 - x_2 = y_1$};
\draw (0.3, 0.9) circle [radius = 0.04];
\node  at (0.3, 0.9) {1};
\draw (0.1, 0.75) circle [radius = 0.04];
\node  at (0.1, 0.75) {2};
\draw (0.6, 0.9) circle [radius = 0.04];
\node  at (0.6, 0.9) {3};
\draw (0.1, 0.4) circle [radius = 0.04];
\node  at (0.1, 0.4) {4};
\draw (0.5, 0.65) circle [radius = 0.04];
\node  at (0.5, 0.65) {5};
\draw (0.35, 0.5) circle [radius = 0.04];
\node  at (0.35, 0.5) {6};
\end{tikzpicture}
\caption{The space $A$ is partitioned into six regions such that within each region the structure is constant. Note that all regions are defined by a system of linear inequalities}
\label{fig:n2case}
\end{figure}
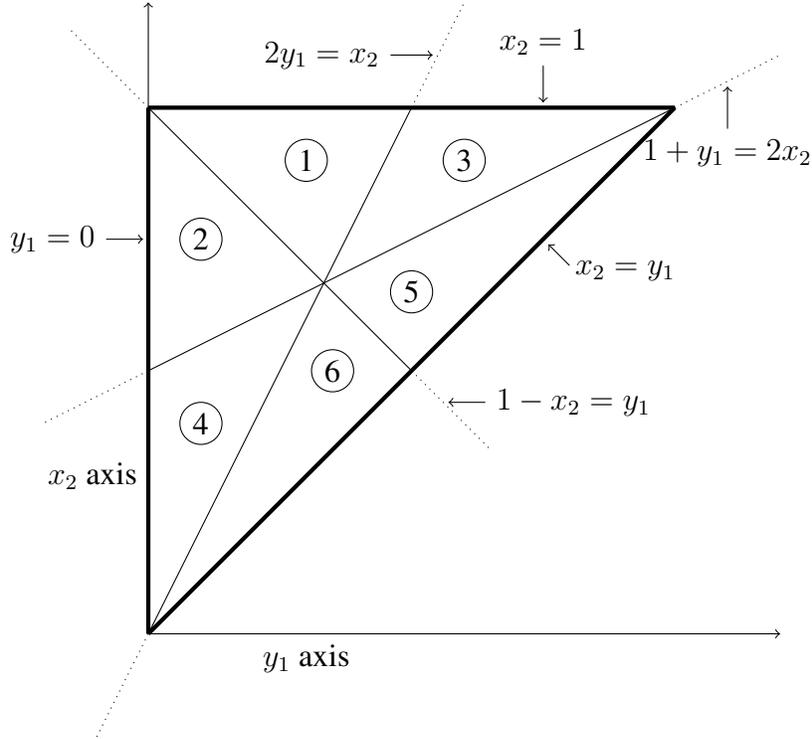

In Table \ref{table:huge} we claim that regions 1-4 are difference dominant, and regions 5 and 6 are balanced. That regions 3 and 4 are difference dominant and that regions 5 and 6 are balanced are immediate to see. To see that regions 1 and 2 are difference dominant is also straightforward. We shall show this is the case for region 1. The proof for region 2 is similar.

In (the interior of) region 1, the following inequalities hold:
\begin{gather}
2 y_1\ <\ x_2 \nonumber\\
1 + y_1\ <\ 2 x_2 \nonumber\\
1 - x_2\ <\ y_1. \label{ieq3}
\end{gather}
From Table \ref{table:huge} we see that $\mu (J+J) = 3 y_1 - 3 x_2 + 3$ and $\mu (J-J) = 4 y_1 - 2 x_2 + 2$. Therefore, $\mu (J-J) - \mu (J+J) = y_1 + x_2 - 1$. We thus are interested in the following inequality:
\begin{gather}
0 \stackrel{?}{\ <\ } y_1 + x_2 - 1.
\end{gather}
However from equation \eqref{ieq3} we know that this inequality holds true everywhere in region 1. Therefore, region 1 is difference dominant.
\end{proof}

In handling the $n = 3$ case of Theorem \ref{thm:no_mstd}, rather than use the restricted simplex model, we shall use the free simplex model. The main utility of this model is that the sum of two vectors in $\mathbb{I}_n$, as represented in this model, is again in $\mathbb{I}_n$ in this model (that is, $\mathbb{I}_n$ has a semigroup structure). Similarly to before, if $J = \{[x_1, y_1], \dots, [x_n, y_n]\}$, we associate to this the point $\overline{J} = (x_1, y_1, \dots, x_n, y_n)$. The goal is again to determine the structure of $J$. Like in the $n=2$ case, in order to figure out the total ordering on the left and right endpoints of the intervals in the sumset/difference set, it suffices to know the outcomes of every comparison between endpoints. Each such comparison can be expressed as evaluation of a linear polynomial in the $x_i$s and $y_i$s. After a bit of thought, one realizes that up to multiplication by unit, there are four types of such polynomials that we are interested in, as described in Table \ref{table:polys}. We call such polynomials \textbf{comparison polynomials}.

\begin{figure}[h]
\begin{center}
\begin{tabular}{ l  l }
 Type of polynomial & Purpose \\
\hline
$0 \stackrel{?}{<} x_i + x_j - x_k - x_\ell$ & 
\begin{tabular}{l}
Compares $(J_i + J_j)_L$ and $(J_k+J_\ell)_L$ 
\end{tabular} \\
$0 \stackrel{?}{<} x_i + x_j - y_k - y_\ell$ & 
\begin{tabular}{l}
 \\
Compares $(J_i + J_j)_L$ and $(J_k + J_\ell)_R$ \\
If $i < k$ and $\ell < j$, compares $(J_i - J_k)_L$ and $(J_\ell - J_j)_L$
\end{tabular} \\
$0 \stackrel{?}{<} y_i + y_j - y_k - y_\ell$ & 
\begin{tabular}{l}
Compares $(J_i + J_j)_R$ and $(J_k+J_\ell)_R$ 
\end{tabular} \\
$0 \stackrel{?}{<} x_i + y_\ell - x_j - y_k$ & 
\begin{tabular}{l}
If $i < k$ and $\ell < j$, compares $(J_i - J_k)_L$ and $(J_\ell - J_j)_L$
\end{tabular}
\newline \\
\newline
\end{tabular}
\end{center}
\captionof{table}[Table]{The four types of comparison polynomials, along with exactly what type of comparison each type of polynomial is good for.}
\label{table:polys}
\end{figure}

For each linear polynomial as in Table \ref{table:polys}, we have an associated \textbf{coefficient vector}, $v$, namely the vector which when dotted with $(x_1, y_1, \dots, x_n, y_n)$ gives the linear polynomial in question. Additionally, to each such linear polynomial we have an \textbf{associated hyperplane} $H$, namely the hyperplane obtained by setting the polynomial equal to zero. This hyperplane partitions $\mathbb{R}^{2n}$ into two pieces corresponding to the two difference outcomes of the comparison which the linear polynomial represents. Note that $v$ is a basis for the orthogonal complement to $H$.

We now recall some definitions from discrete geometry.

\begin{definition}
A \textbf{central hyperplane arrangement} is a finite collection of hyperplanes which all pass through the origin.
\end{definition}

Note that the set of associated hyperplanes to the linear polynomials of interest form a central hyperplane arrangment which we call the \textbf{structure arrangement}.

\begin{definition}
A \textbf{conical combination} of vectors $v_1$, $\dots$, $v_m$ is any combination of the form
\begin{gather}
\alpha_1 v_1 + \dots + \alpha_m v_m
\end{gather}
such that $\alpha_i \geq 0$ for all $1 \leq i \leq m$. The set of all conical combinations of a set of vectors is called the \textbf{conical span}.
\end{definition}

\begin{definition}
A \textbf{polyhedral cone} is the conical span of a fixed finite set of vectors. Equivalently, it is all points in the intersection of finitely many halfspaces for which the corresponding set of hyperplanes forms a central hyperplane arrangement.
\end{definition}

In this paper we shall be dealing exclusively with rational polyhedral cones, so we can always assume that each generator of the cone is a vector in $\mathbb{Z}^n$ with relatively prime entries (a primitive integer vector).

\begin{definition}
An \textbf{orientation} on a hyperplane is a choice to label one of the corresponding halfspaces as positive and the other as negative. Equivalently, it is a choice of a non-zero vector $b$ in the (1D) orthogonal complement to the hyperplane: the positive halfspace is the set of points $v$ such that $v \cdot b \geq 0$. We call $b$ a \textbf{positive normal}.
\end{definition}

If we have a central hyperplane arrangement, then for each way of simultaneously orienting all the hyperplanes in the arrangement, we get an associated polyhedral cone (this cone may just be zero). Our space is thus partitioned into disjoint polyhedral cones such that disjoint cones have disjoint interiors. Dually, if we choose a specific polyhedral cone arising from a central hyperplane arrangement, then we get an induced orientation on the hyperplane arrangement as follows: for each hyperplane, we say that the halfspace containing the polyhedral cone is the positive halfspace. 

Given the structure arrangement, we get a partition of $\mathbb{R}^{2n}$ into finitely many polyhedral cones, each of which we call a \textbf{structure cone}. For a given structure cone, when choosing positive normals for the induced orientation on the arrangement, we may choose for each hyperplane either the corresponding coefficient vector or its negative.

\begin{definition}
Let $V$ be a polyhedral cone in $\mathbb{R}^n$. Let $V^* = \{w \in \mathbb{R}^{n} : \forall \ v \in V,\ w \cdot v \geq 0 \}$. Then $V^*$ is called the \textbf{dual cone} of $V$.
\end{definition}

We may interpret the dual cone to $V$ as the set of all possible positive normals to oriented hyperplanes such that all of $V$ lies on the positive side of the hyperplane (together with the zero vector).

The following basic result from the theory of polyhedral cones makes it easy to find dual cones, especially when the polyhedral cones are given in terms of intersections of half-spaces (as is the case here).

\begin{proposition}\label{farkas}
Given a polyhedral cone, $V$, the conical span of the set of positive normals in the induced orientation on $V$ is the dual cone of $V$.
\end{proposition}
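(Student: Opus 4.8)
The plan is to prove the two inclusions separately. Write $V = \bigcap_{i=1}^m \{v \in \R^n : b_i \cdot v \geq 0\}$, where $b_1, \dots, b_m$ are the positive normals in the induced orientation on $V$; this is exactly what it means for $V$ to be the polyhedral cone cut out by this oriented central arrangement, since the induced orientation declares, for each hyperplane, the halfspace containing $V$ to be positive. Let $C = \{\alpha_1 b_1 + \dots + \alpha_m b_m : \alpha_i \geq 0 \text{ for all } i\}$ be their conical span. We must show $C = V^*$. The inclusion $C \subseteq V^*$ is immediate: if $w = \sum_i \alpha_i b_i$ with every $\alpha_i \geq 0$, then for any $v \in V$ we have $w \cdot v = \sum_i \alpha_i (b_i \cdot v) \geq 0$, since each summand is a product of nonnegative numbers; hence $w \in V^*$.

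For the reverse inclusion $V^* \subseteq C$ I would argue by contraposition, using a separation argument. Suppose $w \notin C$. The cone $C$ is a nonempty closed convex set (convexity is clear; closedness is discussed below), so the separating hyperplane theorem provides a vector $v$ and a scalar $\beta$ with $v \cdot w < \beta \leq v \cdot c$ for all $c \in C$. Taking $c = 0 \in C$ gives $\beta \leq 0$, so in particular $v \cdot w < 0$. Moreover, if $v \cdot c_0 < 0$ for some $c_0 \in C$, then since $t c_0 \in C$ for every $t \geq 0$ we would have $v \cdot (t c_0) \to -\infty$, contradicting the lower bound $\beta$; hence $v \cdot c \geq 0$ for all $c \in C$, and in particular $b_i \cdot v \geq 0$ for each $i$. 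But $b_i \cdot v \geq 0$ for every $i$ says precisely that $v \in V$, and then $v \cdot w < 0$ exhibits $w \notin V^*$. By contraposition, $V^* \subseteq C$, which completes the proof.

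The only genuine obstacle is justifying that the finitely generated cone $C$ is closed, which is what makes the separation step legitimate; everything else is bookkeeping. In the write-up I would either include the standard short proof of this (induction on the number of generators, via a Carath\'eodory-type reduction), or — more economically — simply invoke Farkas's lemma in the form ``$w \in \mathrm{cone}(b_1, \dots, b_m)$ if and only if every $v$ with $b_i \cdot v \geq 0$ for all $i$ also satisfies $w \cdot v \geq 0$,'' which is verbatim the assertion $C = V^*$ and is standard in any reference on polyhedral geometry (e.g.\ Ziegler's \emph{Lectures on Polytopes}). Either route keeps the argument entirely self-contained up to a single citation.
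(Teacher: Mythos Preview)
Your argument is correct and is the standard separating-hyperplane derivation of Farkas's lemma; you have correctly identified that closedness of the finitely generated cone $C$ is the only nontrivial ingredient, and either of your suggested resolutions (a direct closedness proof or a citation to Farkas) is acceptable.

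Note, however, that the paper does not actually supply a proof of this proposition. It is introduced as ``the following basic result from the theory of polyhedral cones'' and stated without proof, functioning as a cited fact rather than something the paper establishes. So there is no ``paper's proof'' to compare against: your write-up already goes further than the paper does. Given that the proposition is labeled \texttt{farkas} in the source, the authors evidently regard it as equivalent to Farkas's lemma and are content to treat it as standard; your second option---simply invoking Farkas with a reference such as Ziegler---matches the spirit of the paper best and would be the appropriate level of detail here.
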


Suppose $V$ is a structure cone. For all $J$ such that $\overline{J} \in V$, the structure of $J$ is the same. For all $\overline{J} \in V$, there exists a single homogeneous linear polynomial in the $x_i$'s and $y_i$'s, call it $P_+$ with coefficient vector $v_+$, such that $\mu(J+J) = v_+ \cdot \overline{J}$. Analogously there exists $P_-$ with coefficient vector $v_-$ such that $\mu(J-J) = v_- \cdot \overline{J}$. The vectors $v_+$ and $v_-$ are called the \textbf{sum vector} and \textbf{difference vector}, respectively, for the cone $V$. The vector $d = v_+ - v_-$ is called the \textbf{MSTD vector} for $V$; a collection of intervals $J \in \mathbb{I}_n$ with $\overline{J} \in V$ is an MSTD set if and only if $d \cdot \overline{J} > 0$. Note that if $d \cdot \overline{J} = 0$, then $J$ is balanced, and if $d \cdot \overline{J} < 0$, then $J$ is difference dominant.

The MSTD vector gives rise to an oriented hyperplane, namely the orthogonal complement to the span of $d$, with positive normal equal to $d$. We thus have the following crucial observation.

\begin{observation}\label{observation:crucial}
Let $V$ be a structure cone with MSTD vector $d$. If and only if $d = 0$, then all of $V$ is balanced. Now suppose $d$ is non-zero. Then, if and only if $d$ is in the dual cone to $V$, then $V$ is sum-dominant (except possibly on the boundary which may be balanced). If and only if $-d$ is in the dual cone to $V$, then $V$ is difference domaint (except possibly on the boundary which may be balanced). In all other cases, $V$ splits into a sum dominant region and a difference dominant region.
\end{observation}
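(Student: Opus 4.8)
The plan is to reduce the entire statement to the single scalar identity characterizing the MSTD property on $V$ — namely that $\mu(J+J)-\mu(J-J)=d\cdot\overline{J}$ for every $J$ with $\overline{J}\in V$ — together with the bare definition of the dual cone, $V^*=\{w:w\cdot v\ge 0\text{ for all }v\in V\}$. Throughout I would take $V$ to be a \emph{full-dimensional} structure cone (the case bearing on the existence of MSTD sets), so that the only vector orthogonal to all of $V$ is $0$; I would also record that, since $\mu(J+J)$ and $\mu(J-J)$ vary continuously with $\overline{J}$, the identity $d\cdot\overline{J}=\mu(J+J)-\mu(J-J)$, valid on $\operatorname{int}(V)$, extends by continuity to all of the closed cone $V$ (including its boundary).

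First I would settle the balanced case. If $d=0$ then $d\cdot\overline{J}=0$ for every $\overline{J}\in V$, so every such $J$ is balanced; conversely, if all of $V$ is balanced then $d$ vanishes on $V$, which spans $\mathbb{R}^{2n}$, forcing $d=0$. Next, assume $d\neq 0$ and suppose $d\in V^*$. By definition $d\cdot v\ge 0$ for all $v\in V$, so $V$ contains no difference-dominant point. To upgrade this to strict sum-dominance on the interior I would invoke the elementary convexity fact that a linear functional which is bounded below on a convex set and attains its infimum at a relative-interior point is identically zero on that set: if $d\cdot\overline{J}_0=0$ at some $\overline{J}_0\in\operatorname{int}(V)$, then $d$ would be constant (equal to $0$) on $V$, hence $d=0$, a contradiction. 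Thus $d\cdot\overline{J}>0$ on $\operatorname{int}(V)$ and $d\cdot\overline{J}\ge 0$ on $\partial V$, which is exactly ``$V$ is sum-dominant except possibly on the boundary, which may be balanced.'' The converse implication is immediate, since sum-dominance up to a possibly balanced boundary says precisely that $d\cdot\overline{J}\ge 0$ throughout $V$, i.e.\ $d\in V^*$; and the case $-d\in V^*$ follows verbatim after replacing $d$ by $-d$ and swapping the roles of sums and differences.

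Finally, for the remaining case $d\neq 0$, $d\notin V^*$, $-d\notin V^*$: the failure $d\notin V^*$ yields some $v_1\in V$ with $d\cdot v_1<0$, and the failure $-d\notin V^*$ yields some $v_2\in V$ with $d\cdot v_2>0$, so $V$ genuinely meets both open halfspaces determined by the hyperplane $d^\perp$. Hence $\{v\in V:d\cdot v>0\}$ is a nonempty sum-dominant region, $\{v\in V:d\cdot v<0\}$ is a nonempty difference-dominant region, and $\{v\in V:d\cdot v=0\}$ is the lower-dimensional balanced slice separating them. For completeness I would also note that when $d\neq 0$ the conditions $d\in V^*$ and $-d\in V^*$ cannot hold simultaneously (their conjunction forces $d\cdot v=0$ on all of $V$, hence $d=0$), so the four cases listed are exhaustive and mutually exclusive. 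I do not expect a genuine obstacle here: the argument is essentially an unwinding of the definitions of the MSTD vector and the dual cone, and the only point requiring any care is the convexity lemma ruling out interior zeros of $d$ when $d\in V^*$, together with keeping track of the (harmless) full-dimensionality hypothesis on $V$.
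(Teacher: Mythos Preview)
Your proposal is correct and is precisely the argument the paper has in mind: the paper states this as an ``observation'' with no proof, relying on the immediately preceding remark that $J$ is MSTD, balanced, or difference-dominant according as $d\cdot\overline{J}$ is positive, zero, or negative, together with the definition of the dual cone. Your write-up simply makes explicit the details (full-dimensionality of structure cones, the convexity argument ruling out interior zeros, and the mutual exclusivity of the cases) that the paper leaves tacit.
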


\begin{lemma}
For all $J \in \mathbb{I}_3$, $J$ is not an MSTD set.
\end{lemma}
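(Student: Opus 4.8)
The plan is to carry out, for $\mathbb{I}_3$, the program that the $n=2$ case illustrated, but now organized entirely through the structure-cone machinery. Work in the free simplex model, so $\overline{J} = (x_1,y_1,x_2,y_2,x_3,y_3)$ with $x_1 \le y_1 \le x_2 \le y_2 \le x_3 \le y_3$; this region is itself a polyhedral cone $\Sigma_3$, and the structure arrangement subdivides it into finitely many structure cones $V$. For each such $V$ one extracts the sum vector $v_+$ and difference vector $v_-$, hence the MSTD vector $d = v_+ - v_-$, and then Observation \ref{observation:crucial} reduces everything to a single assertion: for every structure cone $V$, either $d = 0$ or $-d \in V^{*}$. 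By Proposition \ref{farkas}, checking $-d \in V^{*}$ means writing $v_- - v_+$ as a nonnegative combination of the positive normals of $V$, and those positive normals are, one per hyperplane of the structure arrangement, the comparison-polynomial coefficient vectors of Table \ref{table:polys} signed so that $V$ sits on the positive side. So the whole proof is: enumerate the structure cones, and for each exhibit such a nonnegative combination (or note $d=0$), exactly as Table \ref{table:huge} does for $n=2$.

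To make this finite enumeration tractable I would prune aggressively before computing anything. The ordering $x_1 \le y_1 \le \cdots \le y_3$ already forces most comparisons in Table \ref{table:polys}: among left endpoints of the six sum-intervals only $2x_2$ versus $x_1 + x_3$ is free, among right endpoints only $y_1 + y_3$ versus $2y_2$, and the difference-set comparisons collapse drastically because $\mathcal{A} - \mathcal{A}$ is symmetric about $0$, so its structure is determined by the positive half, i.e.\ by the relative positions of $J_1-J_1,\,J_2-J_2,\,J_3-J_3,\,J_2-J_1,\,J_3-J_1,\,J_3-J_2$. The remaining genuine branch points are the ``gap'' comparisons ``$(J_i\pm J_j)_R < (J_k\pm J_\ell)_L$''. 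On top of that I would use the order-two symmetry $\overline{J}\mapsto(-y_3,-x_3,-y_2,-x_2,-y_1,-x_1)$ induced by $\mathcal{A}\mapsto-\mathcal{A}$: it fixes the values $\mu(\mathcal{A}\pm\mathcal{A})$ and permutes the structure cones, so only one cone per orbit needs to be treated. What survives is a modest table listing each remaining structure cone together with $v_+$, $v_-$, and the certificate $v_- - v_+\in V^{*}$.

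The main obstacle is not a single hard idea but accurate bookkeeping: inside each structure cone one must correctly read off $\mu(\mathcal{A}+\mathcal{A})$ and $\mu(\mathcal{A}-\mathcal{A})$, and the delicate point (absent in the $n=2$ case) is that with six sub-intervals on each side a would-be gap between two of them may be filled by a third that spans over it, so ``gap'' must mean a maximal interval covered by no sub-interval at all. Once the measures are right, certifying $v_- - v_+\in V^{*}$ is a small linear-feasibility computation per cone. A useful reassurance: if the lemma holds then no structure cone can split into a nonempty sum-dominant part and a nonempty difference-dominant part, so the expected outcome for every cone really is $d=0$ or $-d\in V^{*}$, and a cone where both $d$ and $-d$ miss $V^{*}$ would signal a computational slip. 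If one prefers to avoid the cone enumeration, an alternative is the direct route: since $\mathcal{A}+\mathcal{A}$ and $\mathcal{A}-\mathcal{A}$ have the same span $2(y_3-x_1)$, the lemma is equivalent to ``total gap length in $\mathcal{A}+\mathcal{A}$ $\ge$ total gap length in $\mathcal{A}-\mathcal{A}$'', and with only three intervals (parametrized by the lengths $\ell_i = y_i - x_i$ and the two internal gaps $\delta_1 = x_2-y_1$, $\delta_2 = x_3-y_2$) all possible gaps on each side can be listed and compared term by term after the same symmetry reduction; I would pursue whichever route produces the shorter table.
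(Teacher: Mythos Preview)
Your approach is essentially identical to the paper's: enumerate the structure cones in the free simplex model and verify for each that either $d=0$ or $-d\in V^{*}$, exactly via Observation \ref{observation:crucial} and Proposition \ref{farkas}. The paper, however, executes this by computer (a SAGE implementation of an enumeration algorithm), finding over 500 cones for $n=3$ even after merging cones that share the same $(v_+,v_-)$; so your expectation of a ``modest table'' amenable to hand tabulation is optimistic even after the $\mathcal{A}\mapsto -\mathcal{A}$ symmetry, and the paper neither attempts a by-hand listing nor pursues your alternative gap-comparison route.
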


\begin{proof}
We now have all the tools necessary to handle the $n = 3$ case of Theorem \ref{thm:no_mstd}. In light of Observation \ref{observation:crucial}, rather than needing  to (somehow) check the uncountably many possible collections of three intervals, we instead now need only check that for each structure cone $V$ for the $\mathbb{I}_3$ structure arrangement, either $d = 0$ or $-d \in V^*$. To prove the $n = 3$ case, we proceed as in the $n = 2$ case, namely we enumerate all structure cones and show that in all cases either $d = 0$ or $-d \in V^*$. We present an algorithm to carry our this procedure. A variant on this algorithm was implemented in SAGE \cite{sagemath} on a computer to enumerate and check all cases. Whereas there were only six cases for $n = 2$, there are over 500 cases for $n = 3$.

The difficult part of the algorithm is enumerating all of the structure cones. We shall describe a simple algorithm for doing just that; after stating the algorithm we shall discuss exactly what it does (and why it works).

\begin{algorithm}[h]
\LinesNumbered
 \textbf{Input}:{\ the number of intervals, $n$} \\
 \textbf{Output}:{\ the dual cones to the structure cones} \\
 \emph{partial\_dual\_cones} = $\{ \{ \hat{y}_1 - \hat{x}_1, \hat{x}_2 - \hat{y}_1, \dots, \hat{y}_n - \hat{x}_n\} \}$ \\
 \emph{list\_of\_normals} = \emph{GENERATE\_LIST\_OF\_NORMALS} ($n$) \\
 \For{new\_normal in list\_of\_normals}{
 	\emph{new\_partial\_dual\_cones} = $\emptyset$ \\
	\For{partial\_dual\_cone in partial\_dual\_cones}{
		\If{IS\_CONSISTENT (partial\_dual\_cone, new\_normal)}{
			\emph{new\_partial\_dual\_cones} $\cup =$ (\emph{partial\_dual\_cone} $\cup$ \emph{new\_normal}) 
		}
		\If{IS\_CONSISTENT (partial\_dual\_cone, $-$new\_normal}{
			\emph{new\_partial\_dual\_cones} $\cup =$ (\emph{partial\_dual\_cone} $\cup$ ($-$\emph{new\_normal}))
		}
	
	}
	\emph{partial\_dual\_cones} = \emph{new\_partial\_dual\_cones}
 
 }
\Return{partial\_dual\_cones}
\medskip
\caption{Algorithm to find the structure cones with non-empty interior.}
\label{alg:mstd}
\end{algorithm}

In essence, Algorithm \ref{alg:mstd} adds one hyperplane at a time to our arrangement, keeping track at each step what the non-trivial cones are (the cones that aren't just the zero vector). It represents each cone by the generators of its dual cone, that is by the choice of orientation (positive normals) on the partial hyperplane arrangement which gives rise to that specific cone. 

Since for all $J \in \mathbb{I}_n$, $x_1 \leq y_1 \leq \dots \leq x_n \leq y_n$, for every structure cone of interest to us, the vectors $\hat{y}_1 - \hat{x}_1, \dots, \hat{y}_n - \hat{x}_n$ must be positive normals (where $\hat{x}_i$ is the coefficient vector to the polynomial $x_i$). This explains line 3. The variable \emph{partial\_dual\_cones} keeps track of all partial choices of positive normals for structure cones in our arrangement.

In line 4 of Algorithm 1, the function \emph{GENERATE\_LIST\_OF\_NORMALS} returns the coefficient vectors for all the relevant comparison polynomials (irredundantly up to multiplication by unit).

The remainder of the algorithm consists of two loops. The outer loop iterates through the set of normals in this list, and the inner loop iterates through the cones in our partial hyperplane arrangement. A single iteration of the outer loop introduces a new hyperplane to the arrangement (as represented by its normal \emph{new\_normal}) and a single iteration of the inner loop keeps track if a given cone in the partial arrangement (as represented by \emph{partial\_dual\_cone}) splits into two cones. The function \emph{IS\_CONSISTENT}(\emph{partial\_dual\_cone}, \emph{new\_normal}) tests whether or not \emph{$-$new\_normal} is in the conical span of the elements of \emph{partial\_dual\_cone}. In other words, it tests if the cone corresponding to \emph{partial\_dual\_cone} lies entirely on the negative side of the oriented hyperplane with positive normal \emph{new\_normal}. If so, then adding \emph{new\_normal} to \emph{partial\_dual\_cone} as a positive normal would result in a cone with empty interior. Any such cone would be on the boundary of some other cone with non-empty interior, and thus is safe to ignore since it would be handled by other cases. An example of an inconsistent choice of positive normal is the following: suppose we already know that $x_1 \leq x_2 \leq x_3$. Then we necessarily know that $x_1 + x_2 \leq x_2 + x_3$, and thus adding the vector $\hat{x}_1 + \hat{x}_2 - \hat{x}_2 - \hat{x}_3$ to our list of positive normals is inconsistent. Testing consistency in this sense can be phrased a a feasibility problem in linear programming, and thus there exist efficient algorithms to solve this problem. The variable \emph{new\_partial\_dual\_cones} keeps track of the new cones in our new hyperplane arrangement with non-empty interior. Once the interior loop has finished, we set \emph{partial\_dual\_cones} equal to \emph{new\_partial\_dual\_cones} and repeat the outer loop, until we finally finish and return the set of cones with non-empty interior in our structure hyperplane arrangement.

Once we have all of our structure cones, as represented by the the list of positive normals for the induced orientation on the hyperplane arrangement, we have all of the information we need to figure out the sum vector and difference vector for each cone. We do not explicity describe an algorithm to do so here, but leave it as an exercise to the interested reader to think about how to do this.

Once we have the sum and difference vectors, we have the MSTD vector. Testing if the MSTD vector is in the dual cone is a cone membership problem and thus can also be solved by linear programming techniques.

We implemented in SAGE a slight variation on Algorithm \ref{alg:mstd}. In particular, our algorithm does not enumerate all regions where the structure is constant (i.e. all structure cones), but rather just those regions such that within a region, the sum vector and difference vector is constant (e.g. in the $n = 2$ case, regions 5 and 6 have the same sum formula and difference formula, but have different structures; using an improvement on Algorithm \ref{alg:mstd}, these two regions would not be distinguished). We do not describe the improved algorithm here; the mere existence of an algorithm such as Algorithm \ref{alg:mstd} is what is most important. When the improved algorithm ran, it found 502 cases, and in all such cases either $d = 0$ or $-d$ was in the dual cone implying there are no sum dominant sets for $n = 3$. On a personal computer with 4GB of RAM, the computation took around 30 minutes to complete.
\end{proof}

\begin{proof}[Proof of Theorem \ref{thm:no_mstd}]
To complete the proof of Theorem \ref{thm:no_mstd}, we must for that for $n \geq 4$, there do exists collections of $n$ intervals which are sum dominant. Note that it suffices to just show that this is the case when $n = 4$ (if this is not clear, see Section \ref{sec:conclusion} on cleaving). We can actually turn to the existing literature on MSTD sets of integers to find an example. In Hegarty \cite{hegarty}, we have
\begin{gather}
\{0, 1, 2, 4, 5, 9, 12, 13, 14\}.
\end{gather}
The corresponding collection of intervals is:
\begin{gather}
J \ =\  \left\{\left[-\frac{1}{4}, 2+\frac{1}{4}\right], \left[4-\frac{1}{4}, 5+\frac{1}{4}\right], \left[9-\frac{1}{4}, 9+\frac{1}{4}\right], \left[12-\frac{1}{4}, 14+\frac{1}{4}\right]\right\}.
\end{gather}
\end{proof}

\section{From one MSTD set to many}\label{sec:one2many}
In this section we show a method of producing a large class of MSTD sets (both continuous and discrete) from a single set. In particular this gives a new method of producing parametrized families of MSTD sets of integers. 

The idea of the method is straightforward given the ideas in Section \ref{sec:geo}: given $J \in \mathbb{I}_n$, we find the generators for its structure cone. We then also find the MSTD vector, $d$, for this cone. Since $J$ is MSTD, we know that either $d \in V^*$, implying the entire cone is MSTD, or else the oriented hyperplane with positive normal $d$ partitions the hyperplane into two cones, both the non-empty interior. One of these cones is entirely MSTD. We then find the generators for this cone.

Before formally presenting the details of the algorithm (Algorithm \ref{alg:cones2}), we show the procedure on a concrete example. This example has a few peculiarities which make the example simpler than most, but the core ideas are present.

\begin{example}
Let $G = \{0, 1, 2, 4, 5, 9, 12, 13, 14\}$ and let $J$ be its continuous representation. A computer program reveals that $J$ is on the boundary of 108 different structure cones. We choose one of these cones arbitrarily and call it $V$. The extremal rays of $V$ are the columns of the following matrix:
\begin{gather}
\begin{bmatrix}
0 & 0 & 0 & 0 & 0 & 0 & 0 & 1 & -1 \\
1 & 1 & 2 & 2 & 1 & 1 & 1 & 1 & -1 \\
2 & 2 & 4 & 3 & 2 & 2 & 2 & 1 & -1 \\
3 & 2 & 6 & 4 & 3 & 3 & 3 & 1 & -1 \\
4 & 4 & 10 & 7 & 5 & 5 & 6 & 1 & -1 \\
5 & 4 & 10 & 7 & 5 & 5 & 6 & 1 & -1 \\
6 & 5 & 13 & 9 & 7 & 7 & 8 & 1 & -1 \\
7 & 7 & 16 & 11 & 8 & 9 & 10 & 1 & -1
\end{bmatrix}.
\end{gather}
We then compute the MSTD vector, $d$, for V:
\begin{gather}
d\ =\ 
\begin{pmatrix}
-1 & 2 & -2 & 0 & 1 & 2 & -2 & 0
\end{pmatrix}^T.
\end{gather}
We check if $d \in V^*$ and find that it is not. We therefore add $d$ to $V^*$ and then take its dual cone to get a new cone $W$. The extremal rays for $W$ are the columns of the following matrix:
\begin{gather}
A\ =\ \begin{bmatrix}
0   &   0   &   0   &   0   &   0   &   0   &   0   &   1   &   -1 \\
1   &   1   &   2   &   3   &   2   &   1   &   3   &   1   &   -1 \\
2   &   2   &   4   &   5   &   3   &   2   &   5   &   1   &   -1 \\
3   &   2   &   6   &   7   &   4   &   3   &   7   &   1   &   -1 \\
4   &   4   &   10  &   12  &   7   &   6   &   12  &   1   &   -1 \\
5   &   4   &   10  &   12  &   7   &   6   &   12  &   1   &   -1 \\
6   &   5   &   13  &   16  &   9   &   8   &   16  &   1   &   -1 \\
7   &   6   &   16  &   19  &   11  &   10  &   20  &   1   &   -1
\end{bmatrix}.
\end{gather}
Note then that any vector in the conical column span of $A$ is either balanced or sum dominant. A closer examination reveals that all the columns give rise to balanced sets except for the 5th row. Therefore, any vector of the form $A z$ with $z_i \geq 0$ and $z_5 > 0$ gives rise to a continuous MSTD set. Thus from finding a single MSTD set, we have found a huge class of continuous MSTD sets.

With just a bit more work we can find an infinite family of MSTD sets of integers as well. In fact, we shall find all MSTD subsets of $\mathbb{Z}$ whose continuous representation corresponds to a point in $W$. Recall that given a point corresponding to an MSTD set of the form $a = (x_1 - 1/4, y_1 + 1/4, \dots, x_n - 1/4, y_n + 1/4)$ where each $x_i$ and $y_i$ is in $\mathbb{Z}$, then we can obtain an MSTD set of integers, namely $\{[x_1, y_1]_\mathbb{Z}, \dots, [x_n, y_n]_\mathbb{Z}\}$. If $a$ is in some cone $X$, then $\alpha a \in X$ for all $\alpha \geq 0$. In particular, $4 a \in W$; note that $4 a \in \mathbb{Z}^{2n}$. Furthermore, mod 4, the entries of $4 a$ must be $(3, 1, 3, 1, \dots, 3, 1)$. Conversely, if some point in $\mathbb{Z}^{2n}$ is of the form $(3, 1, \dots, 3, 1)$ mod 4, then we can find an MSTD set of integers from that point by dividing by 4 and undoing the discrete to continuous process. We say that a point $x \in \mathbb{R}^{2n}$ is an \textbf{$(\alpha, \beta)$-lattice point mod $k$} if $x \in \mathbb{Z}^{2n}$ and mod $k$, $x = (\alpha, \beta, \dots, \alpha, \beta)$. Thus our goal is to find the $(3, 1)$-lattice points mod $4$ in the conical column span of $A$.

Let $B$ be the matrix $A$ without the last column (the last two columns are linearly dependent, so right now we can ignore the last column). From the above observations, we are interested in finding solutions to the following system of equations:
\begin{gather}
B \alpha\ =\ 
\begin{bmatrix}
0 & 0 & 0 & 0 & 0 & 0 & 0 & 1 \\
 1 & 1 & 2 & 3 & 2 & 1 & 3 & 1  \\
 2 & 2 & 4 & 5 & 3 & 2 & 5 & 1  \\
 3 & 2 & 6 & 7 & 4 & 3 & 7 & 1  \\
 4 & 4 & 10 & 12 & 7 & 6 & 12 & 1 \\
 5 & 4 & 10 & 12 & 7 & 6 & 12 & 1 \\
 6 & 5 & 13 & 16 & 9 & 8 & 16 & 1 \\
 7 & 6 & 16 & 19 & 11 & 10 & 20 & 1
\end{bmatrix}
\begin{bmatrix}
\alpha_1 \\
\alpha_2 \\
\alpha_3 \\
\alpha_4 \\
\alpha_5 \\
\alpha_6 \\
\alpha_7 \\
\alpha_8
\end{bmatrix}
\ =\
\begin{bmatrix}
1 \\
3 \\
1 \\
3 \\
1 \\
3 \\
1 \\
3 
\end{bmatrix}
\pmod{4}.
\end{gather}
Using Mathematica \cite{math}, we get that the unique such $\alpha$ is
\begin{gather}
\alpha\ =\ 
\begin{pmatrix}
2 & 0 & 0 & 0 & 0 & 0 & 0 & 3
\end{pmatrix}^T.
\end{gather}
Therefore, by dividing any vector of the form below by 4 and then undoing the discrete to continuous process, we obtain an MSTD set of integers (see Table \ref{table:examples} for some examples):
\begin{gather}
\begin{bmatrix}
0 & 0 & 0 & 0 & 0 & 0 & 0 & 1 \\
 1 & 1 & 2 & 3 & 2 & 1 & 3 & 1  \\
 2 & 2 & 4 & 5 & 3 & 2 & 5 & 1  \\
 3 & 2 & 6 & 7 & 4 & 3 & 7 & 1  \\
 4 & 4 & 10 & 12 & 7 & 6 & 12 & 1 \\
 5 & 4 & 10 & 12 & 7 & 6 & 12 & 1 \\
 6 & 5 & 13 & 16 & 9 & 8 & 16 & 1 \\
 7 & 6 & 16 & 19 & 11 & 10 & 20 & 1
\end{bmatrix}
\begin{bmatrix}
2 + 4\beta_1 \\
4\beta_2 \\
4\beta_3 \\
4\beta_4 \\
4 + 4\beta_5 \\
4\beta_6 \\
4\beta_7 \\
3 + 4\beta_8
\end{bmatrix}
\ \ \ \
\begin{matrix}
\beta_1 \in \mathbb{N}_0 \\
\beta_2 \in \mathbb{N}_0 \\
\beta_3 \in \mathbb{N}_0 \\
\beta_4 \in \mathbb{N}_0 \\
\beta_5 \in \mathbb{N}_0 \\
\beta_6 \in \mathbb{N}_0 \\
\beta_7 \in \mathbb{N}_0 \\
\beta_8 \in \mathbb{Z} \\
\end{matrix}.
\end{gather}
Furthermore, since the $B$ has 8 rows, which is the dimension of the vector space $W$ lives in, and since the determinant of $B$ is -1, we know that in fact all MSTD sets of integers in $W$ are obtained in this way (in general it will not be the case that the number of generators is equal to the dimension of the space, or even if the number of generators is equal to the dimension, that the determinant of the matrix whose rows are those generators will be $\pm 1$; we shall discuss how to deal with these issues shortly). 

In the above procedure, we took a single MSTD set and not only found a huge class of continuous MSTD sets, but also a non-trivial infinite family of discrete MSTD sets (in fact all of the discrete MSTD sets in the structure cone). Furthermore, the MSTD set we started with led us to find 108 different MSTD cones, so from the same starting MSTD set, we can carry out the above process 107 more times to find even more MSTD sets (all of this arising from finding a single MSTD set)!
\end{example}

\begin{center}
\begin{table}[b]
\begin{tabular}{c c c c c c c c c}
$\beta_1$ & $\beta_2$ & $\beta_3$ & $\beta_4$ & $\beta_5$ & $\beta_6$ & $\beta_7$ & $\beta_8$ & MSTD integer set\\
\hline
0	&	0	&	0	&	0	&	0	&	0	&	0	&	-1	&	$\{0, 1, 2, 4, 5, 9, 12, 13, 14\}$ \\
1	&	0	&	0	&	0	&	0	&	0	&	0	&	-1 	&	$\{0, 1, 2, 3, 6, 7, 8, 13, 14, 18, 19, 20, 21\}$ \\
0	&	1	&	0	&	0	&	0	&	0	&	0	&	-1 	&	$\{0, 1, 2, 3, 6, 7, 13, 17, 18, 19, 20\}$ \\
0	&	0	&	1	&	0	&	0	&	0	&	0	&	-1 	&	$\{0, 1, 2, 3, 4, 8, 9, 10, 11, 19, 25, 26, 27, 28, 29, 30\}$ \\
0	&	0	&	0	&	1	&	0	&	0	&	0	&	-1  &	$\{0, 1, 2, 3, 4, 5, 9, 10, 11, 12, 21, 28, 29, 30, 31, 32, 33\}$ \\
0	&	0	&	0	&	0	&	1	&	0	&	0	&	-1 	&	$\{0, 1, 2, 3, 4, 7, 8, 9, 16, 21, 22, 23, 24, 25\}$ \\
0	&	0	&	0	&	0	&	0	&	1	&	0	&	-1 	&	$\{0, 1, 2, 3, 6, 7, 8, 15, 20, 21, 22, 23, 24, 25\}$ \\
0	&	0	&	0	&	0	&	0	&	0	&	1	&	-1 	&	$\{0, 1, 2, 3, 4, 5, 9, 10, 11, 12, 21, 28, 29, 30, 31, 32, 33, 34\}$
\end{tabular}
\caption{A few of the MSTD sets of integers contained in this structure cone.}
\label{table:examples}
\end{table}
\end{center}

We now discuss a more general algorithm for carrying out the above procedure. There are three main issues which up to this point we have glossed over. 
\begin{enumerate}
    \item The set $J$ may be contained in multiple structure cones, so we need a way of enumerating all structure cones containing $J$; this can be resolved using ideas similar to those presented in Algorithm \ref{alg:mstd}.
    \item A given structure cone containing $J$ may have more generators than the dimension of the space; the trick here is to partition the cone into a collection of simplicial cones.
    \item For a given rational simplicial cone with generators represented as primitive integer vectors, if the determinant of the corresponding matrix is not $\pm 1$, then integer conical combinations of the generators will not necessarily give all lattice points in the cone (and in particular will not necessarily give all lattice points corresponding to MSTD subsets of $\mathbb{Z}$).
\end{enumerate}

First we discuss issue (1); Algorithm \ref{alg:cones2} gives a way of resolving this this issue. In words, this algorithm first orients those hyperplanes for which $\overline{J}$ is on the strictly positive side. This results in a polyhedral cone for which $\overline{J}$ is in the interior. The remaining hyperplanes (as represented by \emph{non\_strict\_normals}) are then one by one tested to see if they partition the partial cones found so far into two cones with non-empty interiors. Lines 15-28 are virtually identical to Algorithm \ref{alg:mstd} and thus their function should be clear.

%
\begin{algorithm}[!ht]
\LinesNumbered
 \textbf{Input}:{\ a point $\overline{J}$ corresponding to an MSTD set consisting of $n$ intervals} \\
 \textbf{Output}:{\ all structure cones containing $\overline{J}$} \\
 \emph{list\_of\_normals} = \emph{GENERATE\_LIST\_OF\_NORMALS}($n$) \\
 \emph{partial\_cone} = $\emptyset$ \\
 \emph{non\_strict\_normals} = $\emptyset$ \\
 \For{new\_normal in list\_of\_normals}{
    \emph{dotted} = \emph{new\_normal} $\cdot$ $\overline{J}$ \\
    \uIf{dotted > 0}{
        \emph{partial\_cone} $\cup = $ \emph{new\_normal}
    }
    \uElseIf{dotted < 0}{
        \emph{partial\_cone} $\cup = $ $-$\emph{new\_normal}
    }
    \uElse{
        \emph{non\_strict\_normals} $\cup = $ \emph{new\_normal}
    }
 }
 \emph{partial\_cones} = $\{$\emph{partial\_cone}$\}$ \\
 \For{normal in non\_strict\_normal}{
    \emph{new\_partial\_cones} = $\emptyset$ \\
    \For{cone in partial\_cones}{
        \If{IS\_CONSISTENT(cone, normal)}{
            \emph{new\_partial\_cones} $\cup =$ (\emph{cone} $\cup$ \emph{normal})
        }
        \If{IS\_CONSISTENT(cone, $-$normal)}{
            \emph{new\_partial\_cones} $\cup =$ (\emph{cone} $\cup$ $-$\emph{normal})
        }
    }
    \emph{partial\_cones} = \emph{new\_partial\_cones} \\
 }
\Return{partial\_cones}
\medskip
\caption{Algorithm describing how to find all the structure cones containing a given point.}
\label{alg:cones2}
\end{algorithm}

Once we have the representation of a polyhedral cone as an intersection of halfspaces, there are algorithms to find its representation as the concial span of a collection of generators. Issue (2) is then quite straightforward to deal with. Partitioning a polyhedral cone into simplicial cones is virtually the same as partitioning a compact polytope into simplices and there exist algorithms to do so.

Issue (3) is also not too bad to deal with. Let $V$ be a rational simplicial cone. Let $A$ be a matrix whose columns are primitive integer vectors generating the cone. Then, $\det(A) \in \mathbb{Z}$. Let $D = |\det(A)|$. Then, since $A^{-1} = \textnormal{adj}(A)/\det(A)$, all $x$ such that $Ax \in \mathbb{Z}$ are in $\mathbb{Z}^{n}/D$, that is the set of points whose product with $D$ is in $\mathbb{Z}^n$. Suppose $x \in \mathbb{Z}^n/D$ such that $Ax$ is a $(3, 1)$-lattice point mod $4$. Then, $A (Dx)$ must be a $(3D, D)$-lattice point mod $4D$. Conversely, if $A y$ is a $(3D, D)$-lattice point mod $4D$, then $A (y/D)$ is a $(3, 1)$-lattice point mod $4$. Therefore, finding all $x \in \mathbb{Q}^n$ such that $A x$ is a $(3, 1)$-lattice point mod $4$ is equivalent to finding all $y \in \mathbb{Z}^n$ such that $A y$ is a $(3D, D)$-lattice point mod $4D$. To do this we need only find the set of solutions to the following system of equations over $\mathbb{Z}/(4 D \mathbb{Z})$.
\begin{gather}
A y\ =\ \begin{bmatrix}
3D \\
D \\
\vdots \\
3D \\
D
\end{bmatrix} \pmod{4D} \label{eqn:mod4d}.
\end{gather}
Suppose that $v_1, \dots, v_{2n}$ are the generators for some simplicial cone, $S$, arising as the MSTD refinement of an MSTD structure cone with MSTD vector $d$. Let $v_{i_1}, \dots, v_{i_k}$ be those $v_i$ such that $v_i \cdot d > 0$. Let $p$ be any particular solution to \ref{eqn:mod4d}. A point $m$ is an MSTD $(3, 1)$-lattice point mod $4$ if and only if it can be expressed as
\begin{gather}
m \ =\ A\bigg(\frac{p}{D} + \frac{k}{D} + 4\ell\bigg)
\end{gather}
where $k$ is in the kernel of $A$ as an endomorphism on $(\mathbb{Z}/(4D)\mathbb{Z})^{2n}$, and $\ell \in \mathbb{Z}^{2n}$ and such that $f = p/D + k/D + 4\ell$ satisfies $f \cdot \hat{e}_i \geq 0$ for all $i \in [2n]$ and $f \cdot \hat{e}_{i_j} > 0$ for some $j \in [k]$.

\section{Continuous to Discrete}\label{sec:c2d}
In this short section we prove a simple ``converse'' to Theorem \ref{basic:thm}: up to scaling, every element in $\mathbb{I}$ can be arbitrarily well approximated by (the continuous representation of) a finite collection of integers.

\begin{theorem}\label{thm:c2d}
Let $\mathcal{A} \in \mathbb{I}$. For every $\varepsilon > 0$, there exists $\alpha > 0$ and $B \subset \mathbb{Z}$, with continuous representation $\mathcal{B}$, such that
$$ \mu \left( (\alpha \mathcal{A} + \alpha \mathcal{A}) \ \Delta \ (\mathcal{B} + \mathcal{B})\right)\ <\ \varepsilon,$$
and
$$ \mu \left( (\alpha \mathcal{A} - \alpha \mathcal{A}) \ \Delta \ (\mathcal{B} - \mathcal{B})\right)\ <\ \varepsilon.$$
\end{theorem}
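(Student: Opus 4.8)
The plan is to approximate $\mathcal{A}$ by a collection of intervals with rational endpoints, then clear denominators by scaling so that all endpoints land on $\frac14 \mathbb{Z}$, and finally recognize the result as the continuous representation of an integer set. First I would write $\mathcal{A} = [a_1,b_1] \cup \dots \cup [a_n,b_n]$ with $a_1 < b_1 < \dots < a_n < b_n$. The key observation is that the sumset and difference set of a collection of intervals are built out of finitely many intervals of the form $[a_i+a_j, b_i+b_j]$ (respectively $[a_i-b_j, b_i-a_j]$), and the measure of the union of a finite collection of intervals is a continuous (indeed piecewise-linear) function of their endpoints. Concretely, the map sending the endpoint vector $(a_1,b_1,\dots,a_n,b_n)$ to the indicator function $\mathbf{1}_{\mathcal{A}+\mathcal{A}}$ (resp. $\mathbf{1}_{\mathcal{A}-\mathcal{A}}$), viewed in $L^1(\mathbb{R})$, is continuous: a small perturbation $\delta$ of every endpoint changes $\mathcal{A}+\mathcal{A}$ only within $\delta$-neighborhoods of its $O(n^2)$ endpoints, so $\mu\big((\mathcal{A}+\mathcal{A}) \,\Delta\, (\mathcal{A}'+\mathcal{A}')\big) = O(n^2 \delta)$, and similarly for the difference set.

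Next I would use this continuity to replace $\mathcal{A}$ by a nearby collection $\mathcal{A}'$ all of whose endpoints are rational. Pick $\delta > 0$ small enough (depending on $n$ and $\varepsilon$) that perturbing each endpoint by less than $\delta$ changes both symmetric differences by less than $\varepsilon$; since $\mathbb{Q}$ is dense in $\mathbb{R}$ and the strict inequalities $a_1 < b_1 < \dots < a_n < b_n$ are open conditions, we can choose such an $\mathcal{A}'$ with all endpoints rational and still a genuine collection of $n$ disjoint intervals. Let $N$ be a common denominator of all $2n$ endpoints of $\mathcal{A}'$, and set $\alpha = 4N$. Then $\alpha \mathcal{A}'$ has all endpoints in $4\mathbb{Z}$; shrinking or shifting we may instead arrange — after a further harmless perturbation well under the error budget, or simply by choosing the rational approximants so that $4N a_i' \equiv -1$ and $4N b_i' \equiv 1 \pmod 4$ is not required, since we are free to translate — that $\alpha \mathcal{A}'$ is exactly $\bigcup_i [b_i^{(0)} - \tfrac14, c_i^{(0)} + \tfrac14]$ for integers $b_i^{(0)} \le c_i^{(0)}$. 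More cleanly: since the endpoints of $\alpha\mathcal{A}'$ lie in $\mathbb{Z}$ after scaling by $4N$, we can instead scale by $4N$ and then translate/re-perturb so the left endpoint of the $i$-th interval is of the form $b_i - \tfrac14$ and the right endpoint $c_i + \tfrac14$ with $b_i, c_i \in \mathbb{Z}$; this is exactly the shape of a continuous representation. Set $B = \bigcup_i [b_i, c_i]_\mathbb{Z} \subset \mathbb{Z}$, so that $\mathcal{B} = B^* = \alpha \mathcal{A}'$ (up to that final perturbation).

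Then by Theorem \ref{discrete to continuous} the set $\mathcal{B} = B^*$ has $\mu(\mathcal{B}+\mathcal{B}) = |B+B|$ and $\mu(\mathcal{B}-\mathcal{B}) = |B-B|$, but more to the point we have the literal equality $\mathcal{B} \pm \mathcal{B} = \alpha\mathcal{A}' \pm \alpha\mathcal{A}'$ as subsets of $\mathbb{R}$ (the $\tfrac14$-fattening just recovers $\alpha\mathcal{A}'$ itself). Combining: $\mu\big((\alpha\mathcal{A}+\alpha\mathcal{A}) \,\Delta\, (\mathcal{B}+\mathcal{B})\big) = \mu\big((\alpha\mathcal{A}+\alpha\mathcal{A}) \,\Delta\, (\alpha\mathcal{A}'+\alpha\mathcal{A}')\big) = \alpha \cdot \mu\big((\mathcal{A}+\mathcal{A}) \,\Delta\, (\mathcal{A}'+\mathcal{A}')\big)$, and likewise for differences. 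The one subtlety is that scaling by $\alpha$ inflates the error by $\alpha$, and $\alpha$ itself depends on the rational approximant $\mathcal{A}'$ — so one cannot fix $\delta$ first and then choose $N$. The fix, and the step I expect to be the main obstacle to state cleanly, is to interleave the two choices: run the density argument to produce a sequence $\mathcal{A}'_m \to \mathcal{A}$ of rational collections with denominators $N_m$, note that $\mu\big((\mathcal{A}+\mathcal{A})\,\Delta\,(\mathcal{A}'_m + \mathcal{A}'_m)\big) \to 0$ as fast as we like by choosing $\mathcal{A}'_m$ to agree with $\mathcal{A}$ to very high precision relative to its own denominator (e.g. approximate the $i$-th endpoint by a rational with denominator $N_m$ to within $1/N_m^2$, so the symmetric-difference error is $O(n^2/N_m^2)$ while $\alpha_m = 4N_m$, giving total error $O(n^2/N_m) \to 0$). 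Choosing $m$ large enough that $O(n^2/N_m) < \varepsilon$ finishes the proof; this same device simultaneously handles the sumset and difference set since both error bounds have the same form. $\done$
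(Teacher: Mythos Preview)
Your proposed resolution to the circularity --- ``approximate the $i$-th endpoint by a rational with denominator $N_m$ to within $1/N_m^2$'' --- is where the argument breaks. For a \emph{prescribed} denominator $N$ and a generic real $x$, the nearest $p/N$ satisfies only $|x-p/N|\le 1/(2N)$; Dirichlet's $1/(qN)$ bound requires letting the denominator $q\le N$ vary with $x$, and once you do that for $2n$ endpoints separately the common denominator $\mathrm{lcm}(q_1,\dots,q_{2n})$ is uncontrolled relative to the precision. With the honest bound $O(1/N_m)$ per endpoint, the symmetric-difference error after scaling by $\alpha=O(N_m)$ is $O(n^2)$, not $O(n^2/N_m)$, and does not tend to zero. (The hand-waved passage from ``endpoints in $4\mathbb{Z}$'' to ``endpoints of the form $b_i-\tfrac14,\,c_i+\tfrac14$'' is a separate, smaller gap.)

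The paper sidesteps the circularity by not separating ``approximate by rationals'' from ``scale.'' It fixes $N\ge\max(3/\delta,\,8k^2/\varepsilon)$ up front (with $\delta$ the shortest interval length and $k$ the number of intervals), and for each $J_i=[x_i,y_i]$ takes $B_i=[\ell_i+1,\,r_i-1]_{\mathbb{Z}}$ where $\ell_i,r_i$ are the extreme integers with $\ell_i/N,\,r_i/N\in J_i$. Setting $B=\bigcup_i B_i$ and $\mathcal{C}:=(1/N)\mathcal{B}$, one has $\mathcal{C}\subseteq\mathcal{A}$, so the symmetric difference is one-sided and
\[
\mu\big((\mathcal{A}\pm\mathcal{A})\,\Delta\,(\mathcal{C}\pm\mathcal{C})\big)\ \le\ \sum_{i,j}\mu\big((J_i\pm J_j)\setminus(\mathcal{C}_i\pm\mathcal{C}_j)\big)\ \le\ \frac{8k^2}{N}\ \le\ \varepsilon.
\]
Because $N$ is chosen \emph{before} any rounding takes place, there is no feedback between the scale factor and the approximation error; your two-step ``approximate, then clear denominators'' route inherently creates that feedback, and the $1/N_m^2$ claim is exactly the (unavailable) ingredient that would be needed to break it.
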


\begin{proof}
The idea of the proof of Theorem \ref{thm:c2d} is to dilate the set $\mathcal{A}$ and then approximate each dilated interval by the set of integers contained in the interval. Without loss of generality, we may assume that $\mathcal{A} \subset [0, 1]$. Suppose that $\mathcal{A}$ consists of $k$ intervals, that is $\mathcal{A} = J_1 \cup \dots \cup J_k$ with $J_i = [x_i, y_i]$ and with $J_i$ to the left of $J_j$ for $i < j$. Suppose the length of the shortest of these intervals is $\delta$. Let $N \in \mathbb{Z}$ be any number such that
\begin{gather}
N\ \geq\ \max \left(\frac{3}{\delta}, \frac{8 k^2}{\varepsilon}\right). \label{eqn:N}
\end{gather}
Let $F_N = \{i/N : 0 \leq i \leq N, \ i \in \mathbb{Z}\}$. By equation \eqref{eqn:N}, we know
\begin{gather}
\# (J_i \cap F_N)\ \geq\ 3 \label{eqn:geq3}
\end{gather}
Let $\ell_i, r_i \in \mathbb{Z}$ be such that $\ell_i/N = \min (J_i \cap F_N)$ and $r_i/N = \max (J_i \cap F_N)$. Notice that by equation \eqref{eqn:N}, 
\begin{gather}
\left|\frac{\ell_i+1}{N} - x_i \right|\ <\ \frac{2}{N} \label{eqn:left}
\end{gather}
and
\begin{gather}
\left|y_i - \frac{r_i-1}{N} \right|\ <\ \frac{2}{N}. \label{eqn:right}
\end{gather}
Let $B_i = [\ell_i + 1, r_i - 1]_{\mathbb{Z}}$ (by equation \ref{eqn:geq3} each $B_i$ is non-empty). Let $B = \bigcup_i B_i$. Let $\mathcal{B}$ be the continuous representation of $B$ with $\mathcal{B}_i$ the continuous representation of $B_i$. Let $\mathcal{C}$ be the set $\mathcal{B}$ scaled by $1/N$, and $\mathcal{C}_i$ the set $\mathcal{B}_i$ scaled by $1/N$. Notice that $\mathcal{C} \subseteq \mathcal{A}$. Therefore
\begin{gather}
\mathcal{D}\ :=\ (\mathcal{A} \pm \mathcal{A}) \ \Delta \ (\mathcal{C} \pm \mathcal{C})\ =\ (\mathcal{A} \pm \mathcal{A}) \setminus (\mathcal{C} \pm \mathcal{C}).
\end{gather}
Let $x \in \mathcal{D}$. We have that
\begin{gather}
\mathcal{D}\ \subseteq\ \bigcup_{i, j} \left((J_i \pm J_j) \setminus (C_i \pm C_j)\right)
\end{gather}
Therefore
\begin{equation}
\begin{split}
\mu (\mathcal{D}) &\ \leq\ \sum_{i, j} \mu \left((J_i \pm J_j) \setminus (C_i \pm C_j)\right) \\
	&\ \leq\ k^2 \max_{i, j} \mu \left((J_i \pm J_j) \setminus (C_i \pm C_j)\right).
\end{split}
\end{equation}
By equations \eqref{eqn:left} and \eqref{eqn:right}, we know that
\begin{gather}
\max_{i, j} \mu \left((J_i \pm J_j) \setminus (C_i \pm C_j)\right)\ <\ \frac{8}{N}.
\end{gather}
Therefore,
\begin{equation}
\begin{split}
	\mu(\mathcal{D}) \ <\ \frac{8 k^2}{N} \ \leq\ \varepsilon.
\end{split}
\end{equation}
\end{proof}

\section{Open questions and concluding remarks}\label{sec:conclusion}
The ideas presented in Sections \ref{sec:geo} and \ref{sec:one2many} motivate several interesting follow-up questions. First, there's the question of whether or not a more elegant proof of Theorem \ref{thm:no_mstd} exists.

\begin{question}
Is there a proof of Theorem \ref{thm:no_mstd} that does not reduce to casework?
\end{question}

There are also several interesting combinatorial questions that arise. One basic question is:
\begin{question}\label{q:how_many}
How many cones are there in the structure hyperplane arrangement for $\mathbb{I}_n$?
\end{question}

A closely related question has been investigated before in \cite{golomb}. The number of such regions is closely related to (and upper bounded by) the even indexed entries in OEIS A237749. There is a rich theory of counting the number of regions in a hyperplane arrangement (see \cite{stanley}, e.g.), and perhaps these techniques could answer Question \ref{q:how_many}.

Another basic question is:
\begin{question}
How many MSTD structure cones are there for $\mathbb{I}_n$? What are their relative (intrinsic) volumes? Is there a ``dominating'' MSTD cone?
\end{question}

In our opinion, one of the most interesting subsequent question is the following.
\begin{question}\label{q:connected}
Do the set of MSTD points in $\mathbb{I}_n$ form a connection region? If so, what is the degree of connectivity of this region (is it $2n$-connected?)? If not, how many connected components does it contain? Does the number of connected components change as $n$ increases?
\end{question}

If the answer to Question \ref{q:connected} is yes, then it would in some sense imply that there is only one ``type'' of MSTD set, from a single MSTD set (with a fixed number of intervals), all other MSTD sets can be found by perturbing that set (and keeping it MSTD along the way).

Given an MSTD $J \in \mathbb{I}_n$, there are several ways of naturally ``embedding'' this set into $\mathbb{I}_{n+1}$. If $J$ is composed of open intervals, then removing any single point in $J$ results in $n+1$ intervals, call it $J'$, but the sets $J$ and $J'$ are basically the same. We say that $J'$ is obtained from $J$ by \textbf{cleaving}. Assuming the answer to Question \ref{q:connected} is no, a refined question is

\begin{question}
Can every MSTD point in $\mathbb{I}_{n+1}$ be obtained by an MSTD path from the image of some cleaved MSTD point in $\mathbb{I}_n$?
\end{question}

If we deal with a compact parameter space, as in the simplex model and unit cube model, we may then talk about the probability that a point is MSTD, balanced, or difference dominant. Figures \ref{graph:balanced} and \ref{graph:mstd} show approximations of these probabilities based on Monte Carlo simulation with 10 million trials.

\begin{center}
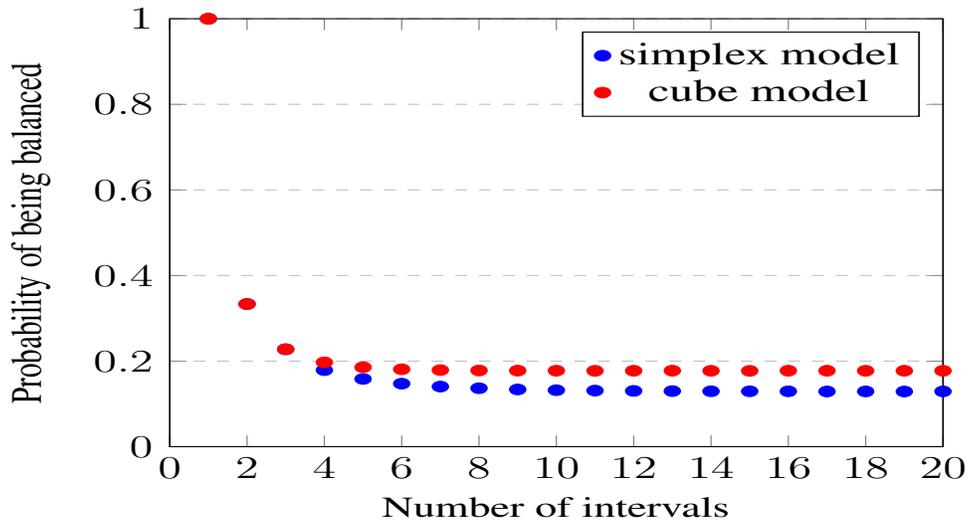
\begin{figure}[!h]
\begin{tikzpicture}[xscale=1.5]
\begin{axis}[
    xlabel={\small Number of intervals},
    ylabel style = {align=left}, ylabel={\small Probability of being balanced},
    xmin=0, xmax=20,
    ymin=0, ymax=1,
    xtick = {0, 2, 4, 6, 8, 10, 12, 14, 16, 18, 20},
    ytick = {0, 0.2, 0.4, 0.6, 0.8, 1.0},
    legend pos=north east,
    ymajorgrids=true,
    grid style=dashed,
]
 
\addplot[
    color=blue,
    only marks,
    ]
    coordinates {(1, 1) (2, 0.3333170) (3, 0.2275130) (4, 0.17887) (5, 0.158289) (6, 0.1472604) (7, 0.1404734) (8, 0.1365882) (9, 0.1337495) (10, 0.1320153) (11, 0.1310538) (12, 0.1303472) (13, 0.1299088) (14, 0.1295455) (15, 0.1294703) (16, 0.1293930) (17, 0.1290355) (18, 0.129145) (19, 0.1289232) (20, 0.1291887)};

\addplot[
    color=red,
    only marks,
    ]
    coordinates {(1, 1) (2, 0.3332818) (3, 0.2278143) (4, 0.1971369) (5, 0.1856548) (6, 0.1808756) (7, 0.1789348) (8, 0.1780362) (9, 0.1777582) (10, 0.177595) (11, 0.1772671) (12, 0.1774295) (13, 0.1776763) (14, 0.1773738) (15, 0.1771929) (16, 0.1775619) (17, 0.1775284) (18, 0.1773592) (19, 0.1774349) (20, 0.1774034)};
    
    \addlegendentry{simplex model}
    \addlegendentry{cube model}
 
\end{axis}
\end{tikzpicture}
\caption{Probability of being balanced in the simplex and cube models based on Monte Carlo simulation (10 million trials).}
\label{graph:balanced}
\end{figure}
\end{center}

\begin{center}
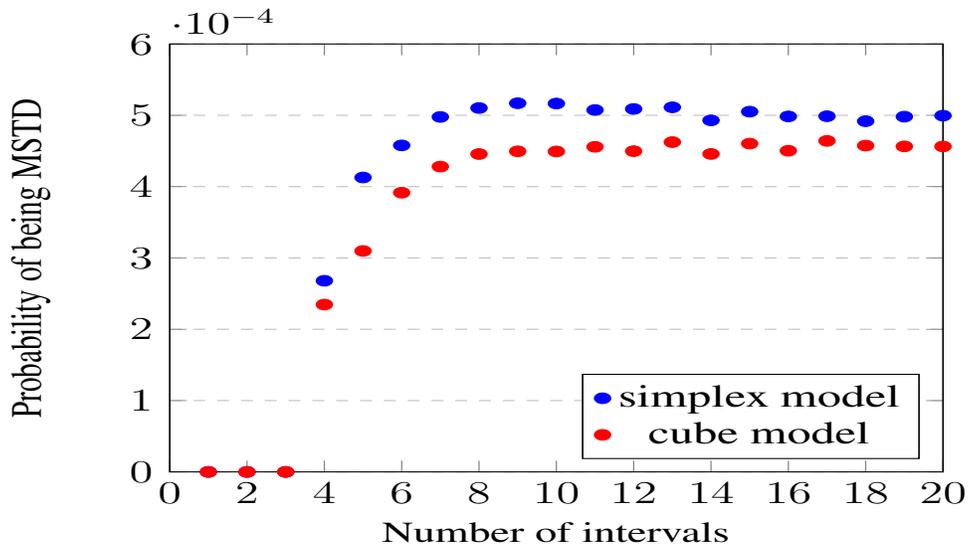
\begin{figure}[!h]
\begin{tikzpicture}[xscale=1.5]
\begin{axis}[
    xlabel={\small Number of intervals},
    ylabel style = {align=left}, ylabel={\small Probability of being MSTD},
    xmin=0, xmax=20,
    ymin=0, ymax=0.0006,
    xtick = {0, 2, 4, 6, 8, 10, 12, 14, 16, 18, 20},
    ytick = {0, 0.0001, 0.0002, 0.0003, 0.0004, 0.0005, 0.0006},
    legend pos=south east,
    ymajorgrids=true,
    grid style=dashed,
]
 
\addplot[
    color=blue,
    only marks,
    ]
    coordinates {(1, 0) (2, 0) (3, 0) (4, 0.0002680) (5, 0.0004128) (6, 0.0004578) (7, 0.0004977) (8, 0.0005102) (9, 0.0005169) (10, 0.0005165) (11, 0.0005074) (12, 0.0005089) (13, 0.0005112) (14, 0.0004929) (15, 0.0005051) (16, 0.0004984) (17, 0.0004987) (18, 0.0004917) (19, 0.0004981) (20, 0.0004995)};

\addplot[
    color=red,
    only marks,
    ]
    coordinates {(1, 0) (2, 0) (3, 0) (4, 0.0002346) (5, 0.0003098) (6, 0.0003915) (7, 0.0004281) (8, 0.0004457) (9, 0.0004495) (10, 0.0004493) (11, 0.0004558) (12, 0.0004497) (13, 0.0004624) (14, 0.0004458) (15, 0.0004603) (16, 0.0004503) (17, 0.0004641) (18, 0.0004575) (19, 0.0004564) (20, 0.0004563)};
    
    \addlegendentry{simplex model}
    \addlegendentry{cube model}
 
\end{axis}
\end{tikzpicture}
\caption{Probability of being MSTD in the simplex and cube models based on Monte Carlo simulation (10 million trials).}
\label{graph:mstd}
\end{figure}
\end{center}

Interestingly, the probabilities for being MSTD and balanced appear to be different for the simplex model and unit cube model. However, in both cases, the probability of being sum-dominant appears to converge to a similar value to the limiting probability in the discrete case ($\sim 4.5 \times 10^{-4}$).

\begin{question}
Do the probabilities of being sum dominant and balanced converge for the simplex model and the cube model? What is the relationship between these MSTD probabilities and the limiting MSTD probability in the discrete case?
\end{question}

One of the main open questions in the study of MSTD sets is to construct a constant density family of MSTD sets as $n \to \infty$. Thus we may ask:

\begin{question}
Can the techniques in this paper be used to construct a constant density family of MSTD subsets of $[n]$ as $n \to \infty$?
\end{question}

There are several interesting subsequent lines of inquiry stemming from the ideas in the paper. More generally, we believe that there is a lot of utility in passing from the discrete to the continuous as in this paper. Ideas closely related to those here were utilized in the related paper \cite{ballot_polytope} to reveal a geometric structure to a certain family of combinatorial objects which was not visible previously. We believe there may be several further fruitful applications of the ideas of this paper.

\newgeometry{margin=0.7in}
\begin{landscape}
\thispagestyle{plain}
\begin{figure}[ht]
\begin{tikzpicture}[xscale = 22, yscale=16.5]
\draw (0, 1) to (1, 1);
\draw (0, 1) to (0, 0);
\draw (0, 0) to (1, 0);
\draw (1, 0) to (1, 1);
\draw (0, 1-2.0/17.0) to (1, 1-2.0/17.0);
\draw (1/13, 1) to (1/13, 0);
\draw (3/13, 1) to (3/13, 0);
\draw (5/13, 1) to (5/13, 0);
\draw (7/13, 1) to (7/13, 0);
\draw (9/13, 1) to (9/13, 0);
\draw (11/13, 1) to (11/13, 0);
\node at (1.0/26.0, 1-2.5/17.0) {\large $J_1 + J_1$};
\node at (1.0/26.0, 1-3.5/17.0) {\large $J_1 + J_2$};
\node at (1/26, 1-4.5/17) {\large $J_2 + J_2$};
\draw (0, 1-5/17) to (1, 1-5/17);
\node at (1/26, 1-5.5/17) {\large $\mathbf{J + J}$};
\draw (0, 1-6/17) to (1, 1-6/17);
\node at (1/26, 1-6.5/17) {\large $J_1 - J_1$};
\node at (1/26, 1-7.5/17) {\large $J_1 - J_2$};
\node at (1/26, 1-8.5/17) {\large $J_2 - J_1$};
\node at (1/26, 1-9.5/17) {\large $J_2 - J_2$};
\draw (0, 1-10/17) to (1, 1-10/17);
\node at (1/26, 1-10.5/17) {\large $\mathbf{J - J}$};
\draw (0, 1-11/17) to (1, 1-11/17);
\node at (1/26, 1-11.5/17) {\small $2 y_1 \stackrel{?}{\leq} x_2$};
\draw (0, 1-12/17) to (1, 1-12/17);
\node at (1/26, 1-12.5/17) {\tiny $1+y_1 \stackrel{?}{\leq} 2 x_2$};
\draw (0, 1-13/17) to (1, 1-13/17);
\node at (1/26, 1-13.5/17) {\tiny $1-x_2 \stackrel{?}{\leq} y_1$};
\draw (0, 1-14/17) to (1, 1-14/17);
\node at (1/26, 1-14.5/17) {\small $\mu(J+J)$};
\draw (0, 1-15/17) to (1, 1-15/17);
\node at (1/26, 1-15.5/17) {\small $\mu(J-J)$};
\draw (0, 1-16/17) to (1, 1-16/17);
\node at (1/26, 1-16.5/17) {\large Type};
\node at (2/13, 1-1/17) {\huge Region 1};
\node at (4/13, 1-1/17) {\huge Region 2};
\node at (6/13, 1-1/17) {\huge Region 3};
\node at (8/13, 1-1/17) {\huge Region 4};
\node at (10/13, 1-1/17) {\huge Region 5};
\node at (12/13, 1-1/17) {\huge Region 6};

\draw (0.084615, 0.852941) to (0.126154, 0.852941);
\draw (0.084615, 0.860294) to (0.084615, 0.845588);
\draw (0.126154, 0.860294) to (0.126154, 0.845588);
\draw (0.146923, 0.794118) to (0.174615, 0.794118);
\draw (0.146923, 0.801471) to (0.146923, 0.786765);
\draw (0.174615, 0.801471) to (0.174615, 0.786765);
\draw (0.209231, 0.735294) to (0.223077, 0.735294);
\draw (0.209231, 0.742647) to (0.209231, 0.727941);
\draw (0.223077, 0.742647) to (0.223077, 0.727941);

\draw (0.238462, 0.852941) to (0.252308, 0.852941);
\draw (0.238462, 0.860294) to (0.238462, 0.845588);
\draw (0.252308, 0.860294) to (0.252308, 0.845588);
\draw (0.290385, 0.794118) to (0.314615, 0.794118);
\draw (0.290385, 0.801471) to (0.290385, 0.786765);
\draw (0.314615, 0.801471) to (0.314615, 0.786765);
\draw (0.342308, 0.735294) to (0.376923, 0.735294);
\draw (0.342308, 0.742647) to (0.342308, 0.727941);
\draw (0.376923, 0.742647) to (0.376923, 0.727941);

\draw (0.392308, 0.852941) to (0.475385, 0.852941);
\draw (0.392308, 0.860294) to (0.392308, 0.845588);
\draw (0.475385, 0.860294) to (0.475385, 0.845588);
\draw (0.454615, 0.794118) to (0.503077, 0.794118);
\draw (0.454615, 0.801471) to (0.454615, 0.786765);
\draw (0.503077, 0.801471) to (0.503077, 0.786765);
\draw (0.516923, 0.735294) to (0.530769, 0.735294);
\draw (0.516923, 0.742647) to (0.516923, 0.727941);
\draw (0.530769, 0.742647) to (0.530769, 0.727941);

\draw (0.546154, 0.852941) to (0.560000, 0.852941);
\draw (0.546154, 0.860294) to (0.546154, 0.845588);
\draw (0.560000, 0.860294) to (0.560000, 0.845588);
\draw (0.573846, 0.794118) to (0.622308, 0.794118);
\draw (0.573846, 0.801471) to (0.573846, 0.786765);
\draw (0.622308, 0.801471) to (0.622308, 0.786765);
\draw (0.601538, 0.735294) to (0.684615, 0.735294);
\draw (0.601538, 0.742647) to (0.601538, 0.727941);
\draw (0.684615, 0.742647) to (0.684615, 0.727941);

\draw (0.700000, 0.852941) to (0.769231, 0.852941);
\draw (0.700000, 0.860294) to (0.700000, 0.845588);
\draw (0.769231, 0.860294) to (0.769231, 0.845588);
\draw (0.745000, 0.794118) to (0.803846, 0.794118);
\draw (0.745000, 0.801471) to (0.745000, 0.786765);
\draw (0.803846, 0.801471) to (0.803846, 0.786765);
\draw (0.790000, 0.735294) to (0.838462, 0.735294);
\draw (0.790000, 0.742647) to (0.790000, 0.727941);
\draw (0.838462, 0.742647) to (0.838462, 0.727941);

\draw (0.853846, 0.852941) to (0.902308, 0.852941);
\draw (0.853846, 0.860294) to (0.853846, 0.845588);
\draw (0.902308, 0.860294) to (0.902308, 0.845588);
\draw (0.888462, 0.794118) to (0.947308, 0.794118);
\draw (0.888462, 0.801471) to (0.888462, 0.786765);
\draw (0.947308, 0.801471) to (0.947308, 0.786765);
\draw (0.923077, 0.735294) to (0.992308, 0.735294);
\draw (0.923077, 0.742647) to (0.923077, 0.727941);
\draw (0.992308, 0.742647) to (0.992308, 0.727941);

\draw (0.133077, 0.617647) to (0.174615, 0.617647);
\draw (0.133077, 0.625000) to (0.133077, 0.610294);
\draw (0.174615, 0.625000) to (0.174615, 0.610294);
\draw (0.084615, 0.558824) to (0.112308, 0.558824);
\draw (0.084615, 0.566176) to (0.084615, 0.551471);
\draw (0.112308, 0.566176) to (0.112308, 0.551471);
\draw (0.195385, 0.500000) to (0.223077, 0.500000);
\draw (0.195385, 0.507353) to (0.195385, 0.492647);
\draw (0.223077, 0.507353) to (0.223077, 0.492647);
\draw (0.146923, 0.441176) to (0.160769, 0.441176);
\draw (0.146923, 0.448529) to (0.146923, 0.433824);
\draw (0.160769, 0.448529) to (0.160769, 0.433824);

\draw (0.300769, 0.617647) to (0.314615, 0.617647);
\draw (0.300769, 0.625000) to (0.300769, 0.610294);
\draw (0.314615, 0.625000) to (0.314615, 0.610294);
\draw (0.238462, 0.558824) to (0.262692, 0.558824);
\draw (0.238462, 0.566176) to (0.238462, 0.551471);
\draw (0.262692, 0.566176) to (0.262692, 0.551471);
\draw (0.352692, 0.500000) to (0.376923, 0.500000);
\draw (0.352692, 0.507353) to (0.352692, 0.492647);
\draw (0.376923, 0.507353) to (0.376923, 0.492647);
\draw (0.290385, 0.441176) to (0.325000, 0.441176);
\draw (0.290385, 0.448529) to (0.290385, 0.433824);
\draw (0.325000, 0.448529) to (0.325000, 0.433824);

\draw (0.420000, 0.617647) to (0.503077, 0.617647);
\draw (0.420000, 0.625000) to (0.420000, 0.610294);
\draw (0.503077, 0.625000) to (0.503077, 0.610294);
\draw (0.392308, 0.558824) to (0.440769, 0.558824);
\draw (0.392308, 0.566176) to (0.392308, 0.551471);
\draw (0.440769, 0.566176) to (0.440769, 0.551471);
\draw (0.482308, 0.500000) to (0.530769, 0.500000);
\draw (0.482308, 0.507353) to (0.482308, 0.492647);
\draw (0.530769, 0.507353) to (0.530769, 0.492647);
\draw (0.454615, 0.441176) to (0.468462, 0.441176);
\draw (0.454615, 0.448529) to (0.454615, 0.433824);
\draw (0.468462, 0.448529) to (0.468462, 0.433824);

\draw (0.608462, 0.617647) to (0.622308, 0.617647);
\draw (0.608462, 0.625000) to (0.608462, 0.610294);
\draw (0.622308, 0.625000) to (0.622308, 0.610294);
\draw (0.546154, 0.558824) to (0.594615, 0.558824);
\draw (0.546154, 0.566176) to (0.546154, 0.551471);
\draw (0.594615, 0.566176) to (0.594615, 0.551471);
\draw (0.636154, 0.500000) to (0.684615, 0.500000);
\draw (0.636154, 0.507353) to (0.636154, 0.492647);
\draw (0.684615, 0.507353) to (0.684615, 0.492647);
\draw (0.573846, 0.441176) to (0.656923, 0.441176);
\draw (0.573846, 0.448529) to (0.573846, 0.433824);
\draw (0.656923, 0.448529) to (0.656923, 0.433824);

\draw (0.734615, 0.617647) to (0.803846, 0.617647);
\draw (0.734615, 0.625000) to (0.734615, 0.610294);
\draw (0.803846, 0.625000) to (0.803846, 0.610294);
\draw (0.700000, 0.558824) to (0.758846, 0.558824);
\draw (0.700000, 0.566176) to (0.700000, 0.551471);
\draw (0.758846, 0.566176) to (0.758846, 0.551471);
\draw (0.779615, 0.500000) to (0.838462, 0.500000);
\draw (0.779615, 0.507353) to (0.779615, 0.492647);
\draw (0.838462, 0.507353) to (0.838462, 0.492647);
\draw (0.745000, 0.441176) to (0.793462, 0.441176);
\draw (0.745000, 0.448529) to (0.745000, 0.433824);
\draw (0.793462, 0.448529) to (0.793462, 0.433824);

\draw (0.898846, 0.617647) to (0.947308, 0.617647);
\draw (0.898846, 0.625000) to (0.898846, 0.610294);
\draw (0.947308, 0.625000) to (0.947308, 0.610294);
\draw (0.853846, 0.558824) to (0.912692, 0.558824);
\draw (0.853846, 0.566176) to (0.853846, 0.551471);
\draw (0.912692, 0.566176) to (0.912692, 0.551471);
\draw (0.933462, 0.500000) to (0.992308, 0.500000);
\draw (0.933462, 0.507353) to (0.933462, 0.492647);
\draw (0.992308, 0.507353) to (0.992308, 0.492647);
\draw (0.888462, 0.441176) to (0.957692, 0.441176);
\draw (0.888462, 0.448529) to (0.888462, 0.433824);
\draw (0.957692, 0.448529) to (0.957692, 0.433824);

\draw (0.084615, 0.676471) to (0.126154, 0.676471);
\draw (0.084615, 0.683824) to (0.084615, 0.669118);
\draw (0.126154, 0.683824) to (0.126154, 0.669118);
\draw (0.146923, 0.676471) to (0.174615, 0.676471);
\draw (0.146923, 0.683824) to (0.146923, 0.669118);
\draw (0.174615, 0.683824) to (0.174615, 0.669118);
\draw (0.209231, 0.676471) to (0.223077, 0.676471);
\draw (0.209231, 0.683824) to (0.209231, 0.669118);
\draw (0.223077, 0.683824) to (0.223077, 0.669118);

\draw (0.238462, 0.676471) to (0.252308, 0.676471);
\draw (0.238462, 0.683824) to (0.238462, 0.669118);
\draw (0.252308, 0.683824) to (0.252308, 0.669118);
\draw (0.290385, 0.676471) to (0.314615, 0.676471);
\draw (0.290385, 0.683824) to (0.290385, 0.669118);
\draw (0.314615, 0.683824) to (0.314615, 0.669118);
\draw (0.342308, 0.676471) to (0.376923, 0.676471);
\draw (0.342308, 0.683824) to (0.342308, 0.669118);
\draw (0.376923, 0.683824) to (0.376923, 0.669118);

\draw (0.392308, 0.676471) to (0.503077, 0.676471);
\draw (0.392308, 0.683824) to (0.392308, 0.669118);
\draw (0.503077, 0.683824) to (0.503077, 0.669118);
\draw (0.516923, 0.676471) to (0.530769, 0.676471);
\draw (0.516923, 0.683824) to (0.516923, 0.669118);
\draw (0.530769, 0.683824) to (0.530769, 0.669118);

\draw (0.546154, 0.676471) to (0.560000, 0.676471);
\draw (0.546154, 0.683824) to (0.546154, 0.669118);
\draw (0.560000, 0.683824) to (0.560000, 0.669118);
\draw (0.573846, 0.676471) to (0.684615, 0.676471);
\draw (0.573846, 0.683824) to (0.573846, 0.669118);
\draw (0.684615, 0.683824) to (0.684615, 0.669118);

\draw (0.700000, 0.676471) to (0.838462, 0.676471);
\draw (0.700000, 0.683824) to (0.700000, 0.669118);
\draw (0.838462, 0.683824) to (0.838462, 0.669118);

\draw (0.853846, 0.676471) to (0.992308, 0.676471);
\draw (0.853846, 0.683824) to (0.853846, 0.669118);
\draw (0.992308, 0.683824) to (0.992308, 0.669118);

\draw (0.084615, 0.382353) to (0.112308, 0.382353);
\draw (0.084615, 0.389706) to (0.084615, 0.375000);
\draw (0.112308, 0.389706) to (0.112308, 0.375000);
\draw (0.133077, 0.382353) to (0.174615, 0.382353);
\draw (0.133077, 0.389706) to (0.133077, 0.375000);
\draw (0.174615, 0.389706) to (0.174615, 0.375000);
\draw (0.195385, 0.382353) to (0.223077, 0.382353);
\draw (0.195385, 0.389706) to (0.195385, 0.375000);
\draw (0.223077, 0.389706) to (0.223077, 0.375000);

\draw (0.238462, 0.382353) to (0.262692, 0.382353);
\draw (0.238462, 0.389706) to (0.238462, 0.375000);
\draw (0.262692, 0.389706) to (0.262692, 0.375000);
\draw (0.290385, 0.382353) to (0.325000, 0.382353);
\draw (0.290385, 0.389706) to (0.290385, 0.375000);
\draw (0.325000, 0.389706) to (0.325000, 0.375000);
\draw (0.352692, 0.382353) to (0.376923, 0.382353);
\draw (0.352692, 0.389706) to (0.352692, 0.375000);
\draw (0.376923, 0.389706) to (0.376923, 0.375000);

\draw (0.392308, 0.382353) to (0.530769, 0.382353);
\draw (0.392308, 0.389706) to (0.392308, 0.375000);
\draw (0.530769, 0.389706) to (0.530769, 0.375000);

\draw (0.546154, 0.382353) to (0.684615, 0.382353);
\draw (0.546154, 0.389706) to (0.546154, 0.375000);
\draw (0.684615, 0.389706) to (0.684615, 0.375000);

\draw (0.700000, 0.382353) to (0.838462, 0.382353);
\draw (0.700000, 0.389706) to (0.700000, 0.375000);
\draw (0.838462, 0.389706) to (0.838462, 0.375000);

\draw (0.853846, 0.382353) to (0.992308, 0.382353);
\draw (0.853846, 0.389706) to (0.853846, 0.375000);
\draw (0.992308, 0.389706) to (0.992308, 0.375000);

\node at (2/13, 1-11.5/17) {\LARGE yes};
\node at (4/13, 1-11.5/17) {\LARGE yes};
\node at (6/13, 1-11.5/17) {\LARGE no};
\node at (8/13, 1-11.5/17) {\LARGE yes};
\node at (10/13, 1-11.5/17) {\LARGE no};
\node at (12/13, 1-11.5/17) {\LARGE no};

\node at (2/13, 1-12.5/17) {\LARGE yes};
\node at (4/13, 1-12.5/17) {\LARGE yes};
\node at (6/13, 1-12.5/17) {\LARGE yes};
\node at (8/13, 1-12.5/17) {\LARGE no};
\node at (10/13, 1-12.5/17) {\LARGE no};
\node at (12/13, 1-12.5/17) {\LARGE no};

\node at (2/13, 1-13.5/17) {\LARGE yes};
\node at (4/13, 1-13.5/17) {\LARGE no};
\node at (6/13, 1-13.5/17) {\LARGE yes};
\node at (8/13, 1-13.5/17) {\LARGE no};
\node at (10/13, 1-13.5/17) {\LARGE yes};
\node at (12/13, 1-13.5/17) {\LARGE no};

\node at (2/13, 1-14.5/17) {\large $3y_1 - 3 x_2 + 3$};
\node at (2/13, 1-15.5/17) {\large $4 y_1 - 2 x_2 + 2$};
\node at (2/13, 1-16.5/17) {\small difference dominant};

\node at (4/13, 1-14.5/17) {\large $3y_1 - 3 x_2 + 3$};
\node at (4/13, 1-15.5/17) {\large $2 y_1 - 4 x_2 + 4$};
\node at (4/13, 1-16.5/17) {\small difference dominant};

\node at (6/13, 1-14.5/17) {\large $y_1 - 2 x_2 + 3$};
\node at (6/13, 1-15.5/17) {\large $2$};
\node at (6/13, 1-16.5/17) {\small difference dominant};

\node at (8/13, 1-14.5/17) {\large $2y_1 - x_2 + 2$};
\node at (8/13, 1-15.5/17) {\large $2$};
\node at (8/13, 1-16.5/17) {\small difference dominant};

\node at (10/13, 1-14.5/17) {\large $2$};
\node at (10/13, 1-15.5/17) {\large $2$};
\node at (10/13, 1-16.5/17) {\small balanced};

\node at (12/13, 1-14.5/17) {\large $2$};
\node at (12/13, 1-15.5/17) {\large $2$};
\node at (12/13, 1-16.5/17) {\small balanced};

\end{tikzpicture}
\captionof{table}[Table]{Table enumerating the structures of each of the six regions of $A$, along with the size of the sumset, difference set, and type.}
\label{table:huge}
\end{figure}
\end{landscape}
\restoregeometry

\bigbreak


\printbibliography

\ \\

\addresseshere


\end{document}